\newcommand{\sJ}{\mathcal{J}}
\newcommand{\sS}{\mathcal{S}}
\newcommand{\E}{\mathrm{E}}
\newcommand{\cov}{\mathrm{cov}}
\newcommand{\ddt}[1]{#1^\prime}
\newcommand{\bmat}[1]{\begin{bmatrix}#1\end{bmatrix}}
\newcommand{\mini}[1]{\underset{#1}{\operatorname{minimize}}}
\newcommand{\minn}[1]{\underset{#1}{\operatorname{min}}}
\newcommand{\maxi}[1]{\underset{#1}{\operatorname{maximize}}}
\newcommand{\st}{\text{subject to }}
\newcommand{\smin}{\sigma_{\mathrm{min}}}
\newcommand{\astar}{a}
\newcommand{\ahat}{\hat{a}}
\newcommand{\ahatstar}{\hat{a}}
\newcommand{\ahatonestar}{\hat{a}_{1}}
\newcommand{\ahattwostart}{\hat{a}_{2}^{(t)}}
\newcommand{\lstar}{\lambda}
\newcommand{\lstart}{\lambda^{(t)}}
\title{Residual Minimizing Model Interplation for Parameterized Nonlinear Dynamical Systems}
\author{Paul G.~Constantine\thanks{Department of Mechanical Engineering, Stanford University, Stanford, California 94305
({\tt paul.constantine@stanford.edu}).}
\and 
Qiqi Wang\thanks{Department of Aeronautics and Astronautics,
Massachusetts Institute of Technology, Cambridge, Massachusetts 02139 ({\tt qiqi@mit.edu}).}
}
\begin{document}

\maketitle

\begin{abstract}
We present a method for approximating the solution of a parameterized, nonlinear dynamical system using an
affine combination of solutions computed at other points in the input parameter space. The coefficients of the
affine combination are computed with a nonlinear least squares procedure that minimizes the residual of the governing
equations. The approximation properties of this residual minimizing scheme are comparable to existing reduced basis and
POD-Galerkin model reduction methods, but its implementation requires only independent evaluations of the nonlinear
forcing function. It is particularly appropriate when one wishes to approximate the states at a few points in time
without time marching from the initial conditions. We prove some interesting characteristics of the scheme including an
interpolatory property, and we present heuristics for mitigating the effects of the ill-conditioning and reducing the
overall cost of the method. We apply the method to representative numerical examples from kinetics -- a three state
system with one parameter controlling the stiffness -- and conductive heat transfer -- a nonlinear parabolic PDE with a
random field model for the thermal conductivity.
\end{abstract}

\begin{keywords} 
nonlinear dynamical systems, nonlinear equations, parameterized models, reduced order models, interpolation
\end{keywords}

\pagestyle{myheadings}
\thispagestyle{plain}
\markboth{P.~G. CONSTANTINE AND Q.~WANG}{RESIDUAL MINIMIZING MODEL INTERPOLATION}

\section{Introduction}
\label{sec:intro}

As computational capabilities grow, engineers and decision makers increasingly rely on simulation to aid in 
design and decision-making processes. However, the complexity of the models has kept pace with the growth in computing
power, which has resulted in expensive computer models with complicated parametric dependence. For given parameter
values, each costly evaluation of the model can require extensive time on massively parallel, high performance systems.
Thus, exhaustive parameter studies exploring the relationships between input parameters and model outputs become
infeasible; cheaper reduced order models are needed for sensitivity/uncertainty analysis, design optimization, and
model calibration. 

Reduced order modeling has become a very active field of research. Methods based on the Proper Orthogonal
Decomposition (POD) have been shown to dramatically reduce the computational complexity for approximating the solution
of linear dynamical systems or parameterized linear steady state problems~\cite{Antoulas01,Roms08,Prudhomme02,Bui08}.
The success of such methods for linear models has spurred a slew of recent work on model reduction for nonlinear
models~\cite{Carlberg10,Chaturantabut10,Kerfriden10,Galbally10,Nguyen08}.
In this vein, we focus on nonlinear dynamical systems that depend on a set of input parameters, but the method we
develop can be modified for steady and/or linear parameterized models as well. The parameters may affect material
properties, boundary conditions, forcing terms and/or model uncertainties; we do not consider parameterized initial
conditions. Suppose we can afford to compute a full model solution at a few points in the parameter space, or
equivalently, suppose we have access to a database of previously computed runs. How can we use those stored runs to
cheaply estimate the model output at untested input parameter values?

In this paper, we propose an interpolation\footnote{In the work
of~\cite{Barrault04,Chaturantabut10,Nguyen08-2}, interpolation occurs in the spatial domain; our approximation
interpolates in the parameter domain.} method that employs an affine combination of the stored model evaluations and a
nonlinear least squares procedure for computing the coefficients of the affine combination such that the equation
residual is minimized. If one is interested in approximating the state vector at a new parameter value at only a few
points in time, then constructing and solving the least squares problem will be significantly cheaper than solving the
true model; this justifies the comparison to reduced order modeling. However, if one wishes to approximate the full time
history of a few elements of the state vector, then the least squares problem may be more expensive to solve than the
true model. We will highlight the former situation in the second numerical example.

Like standard ODE solvers, the method
only requires evaluations of the forcing function from the dynamical system. Thus, implementation is straightforward
using existing codes. However, unlike ODE solvers, the function evaluations can be performed independently to take
advantage of parallel architectures. The model interpolation scheme itself has many appealing features including
point-wise optimality by construction, reuse of stored model evaluations, and a strategy for adaptively choosing points in the
parameter space for additional full model evaluations. We observe that the error in approximation decays like the
eigenvalues of a covariance-like operator of the process as more model evaluations added. But we show that achieving a
small residual requires the solution of an ill-conditioned least squares problem; we present a heuristic for taming the
potentially unwieldy condition number.

The linear model of the data is a common feature of many model reduction and interpolation schemes including reduced
basis methods~\cite{Prudhomme02,Veroy05,Grepl05,Bui08}, kriging surfaces/Gaussian process
emulators~\cite{Stein99,Cressie93,Kennedy01}, and polynomial interpolation schemes~\cite{Barthelmann00,Xiu05}. Unlike
the kriging and polynomial schemes, our process utilizes the governing equations to construct the minimization problem
that yields the coefficients, which invariably produces a more accurate interpolant; this is comparable to
the reduced basis methods and POD-based model reduction techniques~\cite{Chaturantabut10,Antoulas01}. However, in
contrast to reduced basis methods and POD-based techniques, we formulate the least squares problem using only the
equation residual, which requires minimal modifications to the full system solvers. Additionally, this residual-based
formulation applies directly to nonlinear models, which have posed a persistent challenge for schemes
based on Galerkin projection.

In Section \ref{sec:algorithm}, we pose the model problem and derive the residual minimizing scheme, including some
specific details of the nonlinear least squares solver. We briefly show in Section \ref{sec:comp} how kriging
interpolation relates to the least squares procedure. In Section \ref{sec:analysis}, we prove
some properties of the residual minimizing scheme, including a lower bound on the average error, an interpolatory
property, and some statements concerning convergence. Section \ref{sec:heuristics} presents heuristics
for adding full model evaluations, reducing the cost of the scheme, and managing the ill-conditioning in the least
squares problems. In Section \ref{sec:examples}, we perform two numerical studies: (i) a simple nonlinear
dynamical system with three state variables and one parameter controlling the stiffness, and (ii) a
two-dimensional nonlinear parabolic PDE model of heat transfer with a random field model for thermal conductivity.
Finally we conclude in Section \ref{sec:conclusions} with a summary and directions for future work.

\section{Residual Minimizing Model Interpolation}
\label{sec:algorithm}

Let $x(t)=x(t,s)$ be a $\mathbb{R}^p$-valued process that satisfies the dynamical system
\begin{equation}
\label{eq:model}
\ddt{x} = f(x,t,s), \qquad t\in[0,T],\qquad s\in\sS,
\end{equation}
with initial condition $x(0)=x_0$. The space $\sS\subset\mathbb{R}^d$ is the input parameter space, and we assume
the process is bounded for all $s\in\sS$ and $t\in[0,T]$. Note that the parameters affect only the dynamics of the
system -- not the initial condition. The $\mathbb{R}^p$-valued function $f$ is nonlinear in the states $x$, and it
often represents a discretized differential operator with forcing and boundary terms included appropriately. We assume
that $f$ is Lipschitz continuous and its Jacobian with respect to the states is nonsingular, which excludes systems with
bifurcations. The process satisfying \eqref{eq:model} is unique in the following sense: Let $y(t)$ be a differentiable
$\mathbb{R}^p$-valued process with $y(0)=x_0$, and define the integrated residual $\phi(y)$ as
\begin{equation}
\label{eq:resid}
\phi(y) \;=\; \int_0^T \left\|\ddt{y}-f(y,t,s)\right\|^2\,dt,
\end{equation}
where $\|\cdot\|$ is the standard Euclidean norm. If $\phi(y)=0$, then clearly $y(t)=x(t)$ for $t\in[0,T]$. This
residual-based definition of uniqueness underlies the model interpolation method. Given a discretization of the time
domain $0\leq t_1<\cdots<t_m\leq T$, we define the discretized residual $\phi_m(y)$ as
\begin{equation}
\label{eq:dresid}
\phi(y) \;\approx\;
\phi_m(y) \;=\; \sum_{i=1}^m w_i^2\,\left\|\ddt{y}(t_i)-f(y(t_i),t_i,s)\right\|^2,
\end{equation}
where $w_i^2$ is the integration weight associated with time $t_i$; we write the weights as squared quantities to avoid
the cumbersome square root signs later on. Note that the time discretization of \eqref{eq:dresid} may be a subset of the
time grid used to integrate $x(t)$. In fact, the $t_i$ may be chosen to select a small time window within $[0,T]$. 
Again, it is clear that if $\phi_m(y)=0$, then $y(t_i)=x(t_i)$.

In what follows, we describe a method
for approximating the process $x(t)=x(t,s)$ using a set of solutions computed at other points in the input parameter
space. The essential idea is to construct an affine combination of these precomputed solutions, where the coefficients
are computed with a nonlinear least squares minimization procedure on the discretized residual $\phi_m(y)$. It may not
be immediately obvious that the approximation interpolates the precomputed solutions; we will justify the label
\emph{interpolant} with Theorem \ref{thm:interp}.

Let $x_j(t)=x(t,s_j)$ be the time dependent process with input parameters $s_j\in\sS$ for $j=1,\dots,n$; these represent
the precomputed evaluations of \eqref{eq:model} for the input parameters $s_j$, and we refer to them as the \emph{bases}.
To approximate the process $x(t)=x(t,s)$ for a given $s$, we seek constants $a_j=a_j(s)$ that are independent of time such
that
\begin{equation}
\label{eq:linmodel}
x(t) \;\approx\; \tilde{x}(t) \;=\; \sum_{j=1}^n a_j x_j(t) \;\equiv\; X(t)a,
\end{equation}
where $a$ is an $n$-vector whose $j$th element is $a_j$, and $X(t)$ is a time dependent matrix whose $j$th
column is $x_j(t)$. By definition, the coefficients of the affine combination satisfy
\begin{equation}
\label{eq:constraint}
1\;=\; \sum_{j=1}^n a_j \;=\; e^Ta,
\end{equation}
where $e$ is an $n$-vector of ones. This constraint ensures that the approximation exactly reproduces components of
$x(t)$ that do not depend on the parameters $s$. To see this, let $\chi(t)$ be some component of the state
vector $x(t,s)$ that is independent of $s$. Then
\begin{equation}
\sum_{j=1}^n a_j \chi(t) \;=\; \chi(t)\,\left(\sum_{j=1}^n a_j\right) \;=\; \chi(t).
\end{equation}
For example, \eqref{eq:constraint} guarantees that
the approximation satisfies parameter independent boundary conditions, which arise in many applications of interest.
Since the coefficients $a$ are independent of time, the time derivative of the approximation can be computed as
\begin{align*}
\ddt{\tilde{x}} &= \sum_{j=1}^n a_j \ddt{x_j}\\
&= \sum_{j=1}^n a_j f(x_j(t),t,s_j)\\
&= F(t)a,
\end{align*}
where $F(t)$ is a time dependent matrix whose $j$th column is $f(x_j,t,s_j)$. Note that this can be modified to include
a constant or time dependent mass matrix, as well.

With an eye toward computation, define the matrices $X_i=X(t_i)$ and $F_i=F(t_i)$. Then the discretized residual
$\phi_m(\tilde{x})$ becomes
\begin{align*}
\phi_m(\tilde{x}) 
&= \sum_{i=1}^m w_i^2\,\|\ddt{\tilde{x}}(t_i) - f(\tilde{x}(t_i),t_i,s)\|^2\\
&= \sum_{i=1}^m w_i^2\,\|F_ia - f(X_ia,t_i,s)\|^2\\
&\equiv \rho(a)
\end{align*}
To compute the coefficients $a$ of the approximation, we solve the nonlinear least squares problem
\begin{equation}
\label{eq:nlls}
\begin{array}{ll}
\mini{a} & \rho(a) \\
\st       & e^Ta = 1.
\end{array}
\end{equation}
Note that the minimizer of \eqref{eq:nlls} may not be unique due to potential nonconvexity in $f$, but this should not
deter us. Since we are interested in approximating $x(t)$, any minimizer -- or near minimizer -- of \eqref{eq:nlls} will
be useful.

One could use a standard optimization routine~\cite{Nocedal06} for a nonlinear objective with linear equality
constraints to solve \eqref{eq:nlls}. However, we offer some specifics of a nonlinear least squares algorithm tuned to
the details of this particular problem, namely (i) a single linear equality constraint representing the sum of the
vector elements, (ii) the use of only evaluations of the forcing function to construct the data of the minimization
problem, and (iii) the ill-conditioned nature of the problem.

\subsection{A Nonlinear Least Squares Solver}
To simplify the notation, we define the following quantities:
\begin{equation}
\label{eq:Fphi}
F =
\bmat{w_1 F_1\\ \vdots \\ w_m F_m}
\qquad
\varphi(a) = 
\bmat{
w_1 f(X_1a,t_1,s)\\ \vdots \\ w_m f(X_ma,t_m,s)
}
\end{equation}
The residual $h:\mathbb{R}^n\rightarrow\mathbb{R}^{mp}$ is defined as
\begin{equation}
\label{eq:h}
h(a) = Fa-\varphi(a)
\end{equation}
so that the objective function $\rho(a)$ from \eqref{eq:nlls} can be written as
\begin{equation}
\label{eq:rho}
\rho(a)=\|h(a)\|^2.
\end{equation} 
Let $\sJ=\sJ(a)\in\mathbb{R}^{mp\times n}$ be the Jacobian of $h(a)$. Given a guess $a_k\in\mathbb{R}^n$ such that
$e^Ta_k=1$, the standard Newton step is computed by solving the constrained least squares problem
\begin{equation}
\label{eq:standardnewton}
\begin{array}{ll}
\mini{\delta} & \|\sJ_k\delta + h(a_k)\| \\
\st       & e^T\delta = 0,
\end{array}
\end{equation}
where $\sJ_k=\sJ(a_k)$. Let $\delta_k$ be the minimizer, so that the update becomes
\begin{equation}
\label{eq:update}
a_{k+1} = a_k + \delta_k.
\end{equation}
The constraint on $\delta$ ensures that $a_{k+1}$ sums to one.

Instead of using the standard Newton step \eqref{eq:standardnewton}, we wish to rewrite the problem slightly. Writing it
in this alternative form suggests a method for dealing with the ill-conditioned nature of the problem, which we will
explore in Section \ref{sec:illcon}. We plug the update step \eqref{eq:update} directly into the residual vector and
exploit the constraint $e^Ta_{k+1}=1$ as
\begin{align*}
\sJ_k\delta_k + h(a_k) &= \sJ_k(a_{k+1}-a_k) + h(a_k)\\
&= \sJ_ka_{k+1} + (h(a_k) - \sJ_ka_k)\\
&= \sJ_ka_{k+1} + (h(a_k) - \sJ_ka_k)e^Ta_{k+1}\\
&\equiv R_ka_{k+1}
\end{align*}
where 
\begin{equation}
\label{eq:defRk}
R_k = \sJ_k + (h(a_k) - \sJ_ka_k)e^T.
\end{equation}
Written in this way, each Newton iterate $a_{k+1}$ can be computed by solving the constrained least squares problem
\begin{equation}
\label{eq:newton}
\begin{array}{ll}
\mini{a} & \|R_ka\| \\
\st       & e^Ta = 1.
\end{array}
\end{equation}
This is the heart of the residual minimizing model interpolation scheme. Notice that we used the standard Newton step in
\eqref{eq:update} without any sort of globalizing step length~\cite{Eisenstat94}. To include such a globalizer, we
simply solve \ref{eq:newton} for $a_{k+1}$ and compute $\delta_k$ from \eqref{eq:update}.

We can relate the minimum residual of \eqref{eq:newton} to the true equation residual. Define $r_{k+1}=R_ka_{k+1}$, then
\begin{equation}
\label{eq:residbound}
\|r_{k+1}\| \leq \|\sJ_k\|\|a_{k+1}-a_k\| + \sqrt{\rho(a_k)}.
\end{equation}
Near the solution, we expect $\sJ_k$ to be bounded and the difference in iteration to be small, so that the first term
is negligible. We will use this bound to relate the norm of the equation residual to the conditioning of the constrained
least squares problem in Theorem \ref{thm:illcon}.

\subsubsection{Jacobian-free Newton Step}
\label{sec:fdjacob}

To enable rapid implementation, we next show how to construct a finite difference Jacobian for the nonlinear least
squares using only evaluations of the forcing function $f$ from \eqref{eq:model}. We can write out $\sJ$ as
\begin{equation}
\sJ \;=\; \nabla_a h \;=\; F-\sJ_f X,
\end{equation}
where
\begin{equation}
\sJ_f = 
\bmat{
w_1 \nabla_x f(X_1a,t_1,s) & & \\
 & \ddots &  \\
 & & w_m \nabla_x f(X_ma,t_m,s)
}
\qquad
X = 
\bmat{X_1\\ \vdots \\ X_m}.
\end{equation}
Notice that the Jacobian of $h$ with respect to $a$ contains terms with the Jacobian of $f$ with respect to
the states $x$ multiplied by the basis vectors. Thus, we need only the action of $\sJ_f$ on vectors, similar to
Jacobian-free Newton-Krylov methods~\cite{Knoll04}. We can approximate the action of the Jacobian of $f$ on a vector
with a finite difference gradient; let $x_j$ be the $j$th column of $X$, then
\begin{equation}
\sJ_fx_j\;\approx\;
\frac{1}{\varepsilon}
\left(
\bmat{
w_1 f(X_1a + \varepsilon x_j(t_1),t_1,s)\\
\vdots\\
w_m f(X_ma + \varepsilon x_j(t_m),t_m,s)\\
}
- \bmat{	
w_1 f(X_1a,t_1,s)\\
\vdots\\
w_m f(X_ma,t_m,s)
}\right).
\end{equation}
We can assume that the terms $f(X_ia,t_i,s)$ were computed before approximating the Jacobian to check the norm of the
residual. Therefore, at each Newton iteration we need $nm$ evaluations of $f$ to compute the approximate Jacobian -- one
for each basis at each point in the time discretization. The implementation will use this finite difference
approximation. But for the remainder of the analysis, we assume we have the true Jacobian.

\subsubsection{Initial Guess}
\label{sec:initguess}
The convergence of nonlinear least squares methods depends strongly on the initial guess. In this section, we
propose an initial guess based on treating the forcing function $f$ as though it was linear in the states. To justify
this treatment, let $f(x) = f(x,t,s)$ for given $t$ and $s$. We take the Taylor expansion about the approximation
$\tilde{x}$ as
\begin{equation}
\label{eq:taylor}
f(x) = f(\tilde{x}) + \nabla_xf(\tilde{x}) (x-\tilde{x}) + \dots
\end{equation} 
Evaluate this expansion at $x_j=x_j(t)$, multiply it by $a_j$, and sum over $j$ to get
\begin{align*}
\sum_{j=1}^n a_j f(x_j)
&= \sum_{j=1}^n a_j f(\tilde{x}) + \sum_{j=1}^n a_j \nabla_xf(\tilde{x}) (x_j-\tilde{x}) + \dots\\
&= f(\tilde{x})\underbrace{\left(\sum_{j=1}^na_j\right)}_{=\;1}
+ \nabla_xf(\tilde{x}) \underbrace{\left(\sum_{j=1}^na_jx_j-\tilde{x}\right)}_{=\;0} + \dots
\end{align*}
The constraint eliminates the first order terms in Taylor expansion. If we ignore the higher order terms, then we
can approximate
\begin{equation}
f(\tilde{x},t,s) \;\approx\; \sum_{j=1}^n a_jf(x_j,t,s) \;\equiv\; G(t) a,
\end{equation}
where the $j$th column of $G(t)$ is $f(x_j,t,s)$. Let $G_i=G(t_i)$, and define the $mp\times n$ matrix $G$ as 
\begin{equation}
\label{eq:initg}
G = \bmat{w_1 G_1\\ \vdots \\ w_m G_m}.
\end{equation}
Then to solve for the initial guess $a_0$, we define 
\begin{equation}
\label{eq:rzero}
R_{-1}=F-G 
\end{equation}
and solve \eqref{eq:newton}. In words, we treat $f$ as linear in the states and solve the same constrained
least squares problem. Of course, if $f$ is actually linear, then this is the only step in the approximation; no
Newton iterations are required. In fact, we show in Theorem \ref{thm:initinterp} that using this procedure alone to
compute the coefficients $a$ also produces an interpolant. Therefore, if the system is locally close to linear in the
states, then a small number of Newton iterations will be sufficient.

\section{Comparison to Kriging Interpolation}
\label{sec:comp}

In this section, we compare the quantities computed in the residual minimizing scheme to kriging interpolation, which is
commonly used in geostatistics. 
Suppose that $x(t,s)$ is a scalar valued process with $t\in[0,T]$ and $s\in\sS$.
Following kriging nomenclature, we treat $[0,T]$ as the sample space and $t$ as a random coordinate. Assume for
convenience that $x(t,s)$ has mean zero at each $s\in\sS$,
\begin{equation}
\label{eq:exp}
\E[x(s)] \;=\; \int_0^T x(t,s) \,dt \;=\; 0,\quad s\in\sS.
\end{equation}
The covariance function of the process is then
\begin{equation}
\label{eq:cov}
\cov(s_i,s_j) = \E[x(s_i)x(s_j)].
\end{equation}
Next suppose we are given the fixed values $x_j=x(s_j)$, and we wish to approximate $x=x(s)$ with the
affine model
\begin{equation}
\label{eq:krigmodel}
x\;\approx\;\tilde{x}\;=\;\sum_{j=1}^n x_ja_j,\qquad \sum_{j=1}^n a_j=1.
\end{equation}
This is the model used in ordinary kriging interpolation~\cite{Cressie93}. To compute the coefficients $a_j$, one builds
the following linear system of equations from the assumed covariance function \eqref{eq:cov} (which is typically a model
calibrated with the $\{s_j,x_j\}$ pairs). Define $A_{ij}=\cov(s_i,s_j)$ and $b_i=\cov(s_i,s)$. Then the vector of
coefficients $a$ satisfies
\begin{equation}
\label{eq:krigingkkt}
\bmat{A & e \\ e^T & 0}\bmat{a\\ \lambda}=\bmat{b\\ 1},
\end{equation}
where $\lambda$ is the Lagrange multiplier. 

Next we compare the kriging approach to model interpolation on a scalar valued process. Let $x_j$ be
the vector whose $i$th element is $x(t_i,s_j)$ with $i=1,\dots,m$; in other words, $x_j$ contains the time history of
the process $x$ with input parameters $s_j$. Again, assume that the temporal average of $x(t,s)$ is zero for all
$s\in\sS$. The affine model to approximate the time history for $x=x(s)$ is
\begin{equation}
\label{eq:rmmodel}
x\;\approx\;\tilde{x}\;=\;\sum_{j=1}^n x_ja_j\;=Xa,\qquad 
\sum_{j=1}^n a_j\;=\;e^Ta\;=\;1,
\end{equation}
where $X$ is a matrix whose $j$th column is $x_j$. Notice the similarity between \eqref{eq:krigmodel}
and \eqref{eq:rmmodel}. One way to compute the coefficients $a$ in the spirit of an error minimizing scheme would be to
solve the least squares problem
\begin{equation}
\label{eq:rawlsi}
\begin{array}{ll}
\mini{a} & \|Xa-x\| \\
\st       & e^Ta = 1.
\end{array}
\end{equation}
Ignore for the moment that solving this least squares problem requires $x$ -- the exact vector we are trying to
approximate. The KKT system associated with the constrained least squares problem is
\begin{equation}
\label{eq:rmkkt}
\bmat{X^TX & e \\ e^T & 0}\bmat{a\\ \lambda}=\bmat{X^Tx\\ 1}.
\end{equation}
But notice that each element of $X^TX$ is an inner product between two time histories, and this can be interpreted as
approximating the covariance function with an empirical covariance, i.e.
\begin{equation}
x_k^Tx_j\;=\;\sum_{i=1}^m x(t_i,s_k)x(t_i,s_j) \;\approx\; (m-1)\,\cov(s_k,s_j).
\end{equation}
Similarly for the point $s$,
\begin{equation}
x_k^Tx\;=\;\sum_{i=1}^m x(t_i,s_k)x(t_i,s) \;\approx\; (m-1)\,\cov(s_k,s).
\end{equation}
Then,
\begin{equation}
\frac{1}{m-1}X^TX\approx A,\qquad \frac{1}{m-1}X^Tx \approx b,
\end{equation}
where $A$ and $b$ are from \eqref{eq:krigingkkt}.We can multiply the top equations of \eqref{eq:rmkkt} by $1/(m-1)$ to
transform it to an empirical form of \eqref{eq:krigingkkt}. Loosely speaking, the residual minimizing method for
computing the coefficients $a$ is a transformed version of the least squares problem \eqref{eq:rawlsi}, where the
transformation comes from the underlying model equations. In this sense, we can think of the residual minimizing 
method as a version of kriging where the covariance information comes from sampling the underlying dynamical
model.

\section{Analysis}
\label{sec:analysis}

We begin this section with a summary its results. We first examine the quality of a linear approximation of the process
and derive a lower bound for the average error over the input parameter space; such analysis is related to the error
bounds found in POD-based model reduction~\cite{Antoulas01,Roms08}. We then show that the residual minimizing 
model interpolates the problem data; in fact, even the initial guess proposed in Section \ref{sec:initguess} has an
interpolatory property in most cases. We then show that the coefficients inherit the type of input
parameter dependence from the underlying dynamical model. In particular, if the dynamical system forcing function
depends continuously on the input parameters, then so do the coefficients; the interpolatory property and continuity of
the coefficients are appealing aspects of the scheme. Next we prove that each added basis improves the approximation
over the entire parameter space. Finally, the last theorem of the section relates the minimum singular value of the data
matrix in the Newton step to the equation residual; this result implies that to achieve an approximation with a small
residual one must solve an ill-conditioned least squares problem.

\subsection{An Error Bound}
Before addressing the characteristics of the interpolant, we may ask how well a linear combination of basis vectors 
can approximate some parameterized vector. The following theorem gives a lower bound on the
average error in the best $n$-term linear approximation in the Euclidean norm. This bound is valid for any linear
approximation of a parameterized vector -- not necessarily the time history of a parameterized dynamical system -- and
thus applies to any linear method, including those mentioned in the introduction. The lower bound is useful for
determining a best approximation. If our approximation behaves like the lower bound as $n$ increases, then we can
claim that it behaves like the best approximation. 

\begin{theorem}
\label{thm:approx}
Let $x=x(s)$ be a parameterized $p$-vector, and let $X$ be a full rank matrix of size $p\times n$ with $n<p$. Define the
symmetric, positive semi-definite matrix
\begin{equation}
C \;=\; \int_\sS xx^T \,ds
\end{equation}
and let $C=U\Theta U^T$ be its eigenvalue decomposition, where $\theta_1\geq\cdots\geq\theta_p$ are the ordered
eigenvalues. Then 
\begin{equation}
\label{eq:lowerbound}
\int_\sS \minn{a}\;\|Xa-x\|^2 \,ds \;\geq\;\sqrt{\sum_{k=n+1}^p \theta_k^2}.
\end{equation}
\end{theorem}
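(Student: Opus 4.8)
The plan is to reduce the averaged best-approximation error to a pointwise computation and then apply an eigenvalue/trace argument to the matrix $C$. First I would fix $s$ and recall that, for a full-rank $p\times n$ matrix $X$ with $n<p$, the minimizer over $a$ of $\|Xa-x\|^2$ is the orthogonal projection of $x$ onto the column space of $X$, so $\min_a\|Xa-x\|^2 = \|(I-P_X)x\|^2 = x^T(I-P_X)x$, where $P_X$ is the orthogonal projector onto $\mathrm{range}(X)$. (If $X$ is allowed to vary with $s$, the same holds with $P_{X(s)}$; the statement as written treats $X$ as a single fixed matrix, which only makes the bound easier.) Integrating over $\sS$ and using linearity of the trace,
\begin{equation}
\int_\sS \minn{a}\;\|Xa-x\|^2\,ds \;=\; \int_\sS \mathrm{tr}\!\left((I-P_X)xx^T\right)ds \;=\; \mathrm{tr}\!\left((I-P_X)C\right).
\end{equation}

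Next I would lower-bound $\mathrm{tr}((I-P_X)C)$ over all rank-$n$ orthogonal projectors $P_X$. Writing $Q = I-P_X$, this is a rank-$(p-n)$ orthogonal projector, and by the Ky Fan / Poincaré extremal characterization the minimum of $\mathrm{tr}(QC)$ over rank-$(p-n)$ projectors is the sum of the $p-n$ smallest eigenvalues of $C$, namely $\sum_{k=n+1}^p \theta_k$. Hence
\begin{equation}
\int_\sS \minn{a}\;\|Xa-x\|^2\,ds \;\geq\; \sum_{k=n+1}^p \theta_k.
\end{equation}
Finally, since each $\theta_k\geq 0$, we have $\sum_{k=n+1}^p \theta_k \geq \sqrt{\sum_{k=n+1}^p \theta_k^2}$ (the $\ell^1$ norm of a nonnegative vector dominates its $\ell^2$ norm), which yields the stated bound \eqref{eq:lowerbound}.

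The only genuinely substantive step is the extremal trace inequality $\mathrm{tr}(QC)\geq\sum_{k=n+1}^p\theta_k$ for rank-$(p-n)$ projectors $Q$; everything else is bookkeeping. I would prove it directly: diagonalize $C=U\Theta U^T$, set $\tilde Q = U^TQU$ (still a rank-$(p-n)$ orthogonal projector), and compute $\mathrm{tr}(QC)=\mathrm{tr}(\tilde Q\Theta)=\sum_{k=1}^p \theta_k \tilde Q_{kk}$, where the diagonal entries $d_k=\tilde Q_{kk}$ satisfy $0\le d_k\le 1$ and $\sum_k d_k = p-n$; minimizing a weighted sum with these constraints and nonincreasing weights $\theta_k$ puts all the mass on the smallest eigenvalues, giving $\sum_{k=n+1}^p\theta_k$. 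A slightly delicate point worth a remark is that the last inequality is lossy — the sharper bound is actually $\sum_{k=n+1}^p\theta_k$ — so one might wonder why the paper states the weaker $\sqrt{\sum \theta_k^2}$ form; presumably it is stated this way to match the decay rate of the residual-minimizing scheme's error in later convergence results, and I would note that the proof in fact delivers the stronger inequality.
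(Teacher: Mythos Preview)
Your proof is correct and genuinely different from the paper's. The paper writes $\min_a\|Xa-x\|^2=\|Bx\|^2$ with $B=X(X^TX)^{-1}X^T-I$, then observes $\|Bx\|^2=\|Bxx^TB^T\|_F$ and applies Jensen's inequality to the Frobenius norm to obtain $\int_\sS\|Bx\|^2\,ds\geq\|BCB^T\|_F$; it then argues (by exhibiting the equality case $X=U_{1:n}$) that $\|BCB^T\|_F\geq\sqrt{\sum_{k=n+1}^p\theta_k^2}$. By contrast, you avoid Jensen entirely: since $B^TB=I-P_X$ is itself the projector $Q$, you get the exact identity $\int_\sS\min_a\|Xa-x\|^2\,ds=\mathrm{tr}(QC)$, and then the Ky Fan/Poincar\'e extremal principle gives $\mathrm{tr}(QC)\geq\sum_{k=n+1}^p\theta_k$. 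Your route is more elementary (no convexity argument) and strictly sharper, delivering the $\ell^1$ tail $\sum_{k>n}\theta_k$ rather than the $\ell^2$ tail $\sqrt{\sum_{k>n}\theta_k^2}$ before you weaken it to match the stated bound. The paper's Frobenius-norm route has the minor advantage that the final quantity $\sqrt{\sum\theta_k^2}$ arises naturally rather than by a throwaway $\ell^1\geq\ell^2$ step, but your observation that the stronger bound is available is worth recording.
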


\begin{proof}
For a fixed $s$ with $x=x(s)$, we examine the least squares problem
\begin{equation}
\mini{a}\;\|Xa-x\|.
\end{equation}
Using the normal equations, we have
\begin{equation}
a=(X^TX)^{-1}X^Tx.
\end{equation}
Then the minimum residual is given by
\begin{equation}
\minn{a} \;\|Xa-x\| \;=\; \|(X(X^TX)^{-1}X^T-I)x\| \;=\; \|Bx\|,
\end{equation}
where $B=X(X^TX)^{-1}X^T-I$. Denote the Frobenius norm by $\|\cdot\|_F$. Taking the average of the minimum norm
squared, we have
\begin{align}
\int_\sS \|Bx\|^2 \,ds &=  \int_\sS x^TB^TBx \,ds\\
&= \int_\sS \|Bxx^TB^T\|_F \,ds \\
&\geq \left\|B\,\left(\int_\sS xx^T\,ds \right)\,B^T\right\|_F\label{eq:jensen}\\
&= \|BCB^T\|_F,
\end{align}
where \eqref{eq:jensen} comes from Jensen's inequality. Suppose that $X$ contains the first $n$ columns of $U$,
i.e.~partition
\begin{equation}
U=\bmat{X& Y}.
\end{equation}
Similarly partition the associated eigenvalues
\begin{equation}
\Theta=\bmat{\Theta_1 & \\ & \Theta_2}.
\end{equation}
Then since $X$ and $Y$ are orthogonal,
\begin{align*}
B &= X(X^TX)^{-1}X^T-I\\
&= XX^T-I\\
&= -YY^T.
\end{align*} 
In this case
\begin{align*}
\|BCB^T\|_F &= \|YY^TCYY^T\|_F\\
&= \|Y\Theta_2Y^T\|_F\\
&= \sqrt{\sum_{k=n+1}^p \theta_k^2}
\end{align*}
Therefore, for a given $X$ (not necessarily the eigenvectors),
\begin{equation}
\int_\sS \minn{a} \;\|Xa-x\|^2 \,ds \;\geq\; \|BCB^T\|_F \;\geq\; \sqrt{\sum_{k=p+1}^n \theta_k^2},
\end{equation}
as required.
\end{proof}

In words, Theorem \ref{thm:approx} states that the average optimal linear approximation error is at least as large as
the norm of the neglected eigenvalues of covariance-like matrix $C$. As an aside, we note that if $p=n$, then the best
approximation error is zero, since $X$ is an invertible matrix. 

The result in Theorem \ref{thm:approx} is similar in spirit to the approximation properties of POD-Galerkin based model
reduction techniques~\cite{Antoulas01} and the best approximation results for Karhunen-Loeve type
decompositions~\cite{Loeve78}. Given a matrix of snapshots $X$ whose $j$th column is $x(s_j)$, one can approximate the
matrix $C\approx n^{-1}XX^T$; the error bounds for POD-based reduced order models are typically given in terms of the
singular values of $X$. We can approximate this lower bound for a given problem and compare it to the error for the
residual minimizing reduced model; we will see one numerical example that the approximation error behaves like the
lower bound.

\subsection{Interpolation and Continuity}

A few important properties are immediate from the construction of the residual minimizing model.
Existence follows from existence of a minimizer for the nonlinear least squares problem \eqref{eq:nlls}. Also, by
construction, the coefficients $a$ provide the optimal approximation in the space spanned by the bases, where
optimality is with respect to the surrogate error measure given by the objective function of \eqref{eq:nlls} --
i.e., the residual -- under the constraint that $e^Ta=1$. As in other residual minimizing schemes, the minimum
value of the objective function provides an a posteriori error measure for the approximation. The next few theorems
expose additional interesting properties of the residual minimizing approximation.

\begin{theorem}
\label{thm:interp}
The residual minimizing approximation $\tilde{x}(t,s)$ interpolates the basis elements, i.e.,
\begin{equation}
\tilde{x}(t_i,s_j) = x_j(t_i), \qquad i=1,\dots,m, \quad j=1,\dots,n.
\end{equation}  
\end{theorem}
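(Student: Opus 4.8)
The plan is to show that, when the query parameter $s$ equals one of the sample parameters $s_j$, the $j$th canonical unit vector --- call it $e_j$, not to be confused with the all-ones vector $e$ --- is a \emph{global} minimizer of the constrained least squares problem \eqref{eq:nlls}, and that the associated approximation is exactly $x_j$. First I would set $s=s_j$ and evaluate the objective $\rho$ at $a=e_j$. Since the $j$th column of $X_i=X(t_i)$ is $x_j(t_i)$ and the $j$th column of $F_i=F(t_i)$ is $f(x_j(t_i),t_i,s_j)$, we have $X_ie_j=x_j(t_i)$ and $F_ie_j=f(x_j(t_i),t_i,s_j)$, so every summand in $\rho(e_j)=\sum_{i=1}^m w_i^2\|F_ie_j-f(X_ie_j,t_i,s_j)\|^2$ equals $w_i^2\|f(x_j(t_i),t_i,s_j)-f(x_j(t_i),t_i,s_j)\|^2=0$. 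Hence $\rho(e_j)=0$.

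Next I would note that $e_j$ is feasible for \eqref{eq:nlls}, since $e^Te_j=1$, and that $\rho(a)=\|h(a)\|^2\geq 0$ for every $a$. Consequently $e_j$ attains the global minimum (value $0$) of \eqref{eq:nlls} at $s=s_j$, and the corresponding residual minimizing approximation is $\tilde{x}(t)=X(t)e_j=x_j(t)$; in particular $\tilde{x}(t_i,s_j)=x_j(t_i)$ for every $i$, which is the claim.

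The one point requiring a sentence of care is the non-uniqueness of the minimizer flagged after \eqref{eq:nlls}: a solver could in principle return some other minimizer $a^\star$. I would handle this by observing that any minimizer satisfies $\rho(a^\star)=0$, i.e.\ $F_ia^\star=f(X_ia^\star,t_i,s_j)$ at every node $t_i$; since the process $t\mapsto X(t)a^\star$ is differentiable with value $X(0)a^\star=x_0$ at $t=0$ (using $e^Ta^\star=1$ and the common initial condition of the bases), it has vanishing discretized residual $\phi_m$ and therefore agrees with $x_j$ at the nodes by the residual-based uniqueness discussed around \eqref{eq:dresid}. Thus the interpolation identity holds for whichever minimizer is computed. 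I expect this disambiguation to be the only, and quite mild, obstacle; the core fact $\rho(e_j)=0$ is immediate from unwinding the definitions.
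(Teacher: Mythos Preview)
Your proposal is correct and follows essentially the same approach as the paper: both exhibit the canonical unit vector $e_j$ as a feasible point achieving $\rho=0$ at $s=s_j$, and conclude $\tilde{x}(t_i,s_j)=X_ie_j=x_j(t_i)$. Your extra paragraph on non-uniqueness---invoking $X(0)a^\star=x_0$ via the affine constraint and the residual-based uniqueness around \eqref{eq:dresid}---is more careful than the paper's own proof, which silently assumes that any minimizer yields the same nodal values.
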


\begin{proof}
Let $a$ be a vector such that $e^Ta=1$ and 
\begin{equation}
X_ia=x_j(t_i), \quad F_ia=f(x_j(t_i),t_i,s_j),\quad i=1,\dots,m. 
\end{equation}
Such a vector exists; the $n$-vector of zeros with 1 in the $j$th entry satisfies these conditions. 
For $\rho(a)=\rho(a,s_j)$ from \eqref{eq:nlls},
\begin{align*}
\rho(a) &= \sum_{i=1}^m w_i^2 \|F_ia - f(X_ia,t_i,s_j)\|^2 \\
&= \sum_{i=1}^m w_i^2 \|f(x_j(t_i),t_i,s_j) - f(x_j(t_i),t_i,s_j)\|^2 \;=\; 0
\end{align*}
which is a minimum. Then
\begin{equation}
\tilde{x}(t_i,s_j) \;=\; X_ia \;=\; x_j(t_i), \quad i=1,\dots,m.
\end{equation}
Since $j$ was arbitrary, this completes the proof.
\end{proof}

Theorem \ref{thm:interp} justifies the label \emph{interpolant} for the approximation $\tilde{x}(t,s)$. For many cases,
this property extends to the initial guess described in Section \ref{sec:initguess}. 

\begin{theorem}
\label{thm:initinterp}
Let $R_{-1}=R_{-1}(s)$ be defined as in \eqref{eq:rzero}. Assume the matrix $[R_{-1}^T,e]^T$ has full column rank for
all $s=s_j$. Then the initial guess, denoted by $\tilde{x}_0(t,s)$, interpolates the basis elements, i.e.,
\begin{equation}
\tilde{x}_0(t_i,s_j) = x_j(t_i), \qquad i=1,\dots,m, \quad j=1,\dots,n.
\end{equation}  
\end{theorem}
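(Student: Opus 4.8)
The plan is to mimic the proof of Theorem~\ref{thm:interp}, but replacing the nonlinear residual $\rho(a)$ with the linear least squares problem defined by $R_{-1}$. Fix an index $j$ and consider the candidate coefficient vector $a=e_j$, the $n$-vector of zeros with a $1$ in the $j$th slot; this clearly satisfies $e^Ta=1$. For this vector we have $Ga = \bmat{w_1 G_1 e_j\\ \vdots \\ w_m G_m e_j}$, and the $j$th column of $G_i=G(t_i)$ is by definition $f(x_j(t_i),t_i,s)$. \emph{Crucially}, when we evaluate the initial guess problem at $s=s_j$, that column becomes $f(x_j(t_i),t_i,s_j) = \ddt{x_j}(t_i)$, which is exactly the $j$th column of $F_i$. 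Hence $Fa = Ga$ at $s=s_j$, so $R_{-1}a = (F-G)a = 0$, i.e.\ $\|R_{-1}e_j\|=0$, which is the global minimum of the objective in \eqref{eq:newton}.

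The next step is to argue that $e_j$ is the \emph{unique} minimizer, which is where the full-column-rank hypothesis on $[R_{-1}^T,e]^T$ enters. Suppose $a^\star$ is any feasible minimizer; then $\|R_{-1}a^\star\|=0$, so $R_{-1}a^\star=0$, and also $e^Ta^\star=1=e^Te_j$. Subtracting, the vector $v=a^\star-e_j$ satisfies $R_{-1}v=0$ and $e^Tv=0$, i.e.\ $[R_{-1}^T,e]^T v = 0$. Since that stacked matrix has full column rank, $v=0$, so $a^\star=e_j$. Therefore the coefficient vector produced by solving \eqref{eq:newton} with $R_{-1}$ at $s=s_j$ is exactly $e_j$.

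Finally, I would conclude by substituting back: the initial guess is $\tilde{x}_0(t,s) = X(t)a_0(s)$, so $\tilde{x}_0(t_i,s_j) = X_i e_j = x_j(t_i)$ for each $i=1,\dots,m$. Since $j$ was arbitrary, this gives the claimed interpolation property.

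The main obstacle is not the computation — the identity $Fa=Ga$ at $s=s_j$ for $a=e_j$ is essentially immediate once one notes that the $j$th column of $F_i$ is $\ddt{x_j}(t_i)=f(x_j(t_i),t_i,s_j)$ and the $j$th column of $G_i$ at $s=s_j$ is the same thing. The subtlety to get right is the role of the hypothesis: the claim about the initial guess is about whatever vector the solver returns, so it genuinely needs uniqueness of the minimizer, and uniqueness is exactly what the full-rank assumption on $[R_{-1}^T,e]^T$ buys us. I would also remark (as the theorem statement implicitly concedes with ``in most cases'') that without this rank condition the null space of $R_{-1}$ intersected with $\{e^Tv=0\}$ could be nontrivial, and then a minimizer other than $e_j$ — one that does not interpolate — could be selected; so the hypothesis cannot simply be dropped.
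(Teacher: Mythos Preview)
Your proof is correct and follows essentially the same approach as the paper: exhibit $a=e_j$ as a feasible point achieving $\|R_{-1}a\|=0$, then invoke uniqueness from the full-rank hypothesis to conclude the solver returns $e_j$. The only difference is that you spell out the uniqueness argument explicitly via the null-space computation $[R_{-1}^T,e]^T v=0\Rightarrow v=0$, whereas the paper simply asserts that full column rank of $[R_{-1}^T,e]^T$ implies a unique minimizer of the constrained least squares problem; your version is slightly more self-contained but not a different route.
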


\begin{proof}
The full rank assumption on $[R_{-1}^T,e]^T$ at each $s=s_j$ implies that the constrained least squares problem
\begin{equation}
\begin{array}{ll}
\mini{a} & \|R_{-1}a\| \\
\st       & e^Ta = 1
\end{array}
\end{equation}
has a unique solution. Let $a$ be a vector of zeros with 1 in the $j$th element. Then $e^Ta=1$ and
\begin{align*}
\|R_{-1}a\| &= \|Fa - Ga\| \\
&= \left\|
\bmat{w_1 f(x_j(t_1),t_1,s_j) \\ \vdots \\ w_m f(x_j(t_m),t_m,s_j)}
- \bmat{w_1 f(x_j(t_1),t_1,s_j) \\ \vdots \\ w_m f(x_j(t_m),t_m,s_j)}
\right\| \;=\; 0,
\end{align*}
which is a minimum. By the uniqueness,
\begin{equation}
\tilde{x}_0(t_i,s_j) \;=\; X_ia \;=\; x_j(t_i), \quad i=1,\dots,m.
\end{equation}
Since $j$ was arbitrary, this completes the proof.
\end{proof}

The proof required the full rank assumption on $[R_{-1}^T,e]^T$ at each $s=s_j$. We expect this to be the case in
practice with a properly selected bases $x_j$. If we extend this assumption to all $s\in\sS$, then we make a statement
about the how the coefficients of the interpolant behave over the parameter space. In the next theorem, we
show that the coefficients of the residual minimizing interpolant inherit the behavior of the model terms with respect
to the input parameters.

\begin{theorem}
\label{thm:lsanalyticbases} 
Let $R_k=R_k(s)$ be defined as in \eqref{eq:defRk}, and assume the matrix $[R_k^T,e]^T$
has full column rank for all $s\in\sS$. If the function $f(x,t,s)$ and its Jacobian $\nabla_xf$ depend continuously on
$s$, then the coefficients $a(s)$ computed with a finite number of Newton iterations are also continuous with respect to $s$. 
\end{theorem}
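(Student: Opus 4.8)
The plan is to show that the map $s \mapsto a(s)$ is continuous by exhibiting it as a composition of continuous maps, tracking the coefficient vector through each of the finitely many Newton iterations. The key observation is that a single Newton iteration \eqref{eq:newton} takes the current iterate $a_k$ and the point $s$ as input and produces $a_{k+1}$ as the solution of a constrained least squares problem whose data matrix $R_k = R_k(a_k,s)$ depends continuously on both arguments (given the hypotheses on $f$ and $\nabla_x f$), and the solution of a full-rank constrained least squares problem is a continuous function of its data.

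First I would set up the base case: the initial guess $a_0$ is the solution of the constrained least squares problem with data matrix $R_{-1}(s) = F(s) - G(s)$, whose entries are built from $w_i f(x_j(t_i),t_i,s)$; since $f$ depends continuously on $s$ and the $x_j(t_i)$ are fixed vectors, $R_{-1}(s)$ is continuous in $s$. Under the full-column-rank assumption on $[R_{-1}^T,e]^T$, the KKT system for \eqref{eq:newton} — analogous to \eqref{eq:rmkkt} with $X^TX$ replaced by $R_{-1}^TR_{-1}$ — has a nonsingular coefficient matrix, and I would invoke the standard fact that the solution of a linear system with invertible, continuously-varying coefficient matrix and continuously-varying right-hand side is continuous (via Cramer's rule or the continuity of matrix inversion). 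Hence $s \mapsto a_0(s)$ is continuous.

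Next comes the inductive step. Assume $s \mapsto a_k(s)$ is continuous. The Jacobian $\sJ_k = \sJ(a_k,s) = F - \sJ_f X$ is, from \eqref{eq:Fphi} and the formula for $\sJ_f$, a continuous function of $(a_k,s)$ because it is assembled from $w_i f(X_ia_k,t_i,s)$ and $w_i \nabla_x f(X_ia_k,t_i,s)$, both continuous in $(a_k,s)$ by hypothesis; composing with the continuous map $s\mapsto a_k(s)$ shows $s\mapsto \sJ_k(s)$ is continuous. Likewise $h(a_k) = Fa_k - \varphi(a_k)$ is continuous in $s$, so $R_k(s) = \sJ_k(s) + (h(a_k(s)) - \sJ_k(s)a_k(s))e^T$ from \eqref{eq:defRk} is continuous. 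The full-column-rank hypothesis on $[R_k^T,e]^T$ then makes the KKT matrix for \eqref{eq:newton} nonsingular for every $s$, so $a_{k+1}(s)$ is obtained by solving a linear system with continuously-varying nonsingular matrix and continuously-varying right-hand side, and is therefore continuous. After a finite number of iterations we reach $a(s)$, which is thus continuous.

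The main obstacle — and the reason the full-rank hypothesis is doing real work — is ensuring the constrained least squares problem actually has a well-defined, unique solution that varies continuously; without uniqueness the "map" $s\mapsto a_{k+1}(s)$ is only a set-valued correspondence and need not admit a continuous selection. The full-column-rank assumption on $[R_k^T,e]^T$ for all $s\in\sS$ is precisely what rules this out: it guarantees the KKT/normal-equations matrix is invertible uniformly, which is the hinge of the whole argument. A secondary point worth stating carefully is that the composition of continuous maps used at each step preserves continuity on the (presumably compact) parameter domain $\sS$; this is routine but should be noted since the number of iterations, though finite, may itself depend on $s$ — if it does, one should either fix the iteration count a priori or argue continuity on each region where the count is constant.
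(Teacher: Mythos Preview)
Your proposal is correct and follows essentially the same approach as the paper: write the KKT system for the constrained least squares problem \eqref{eq:newton}, use the full-rank hypothesis on $[R_k^T,e]^T$ to conclude the KKT matrix is invertible, and then invoke continuity of matrix inversion to get continuity of $a_{k+1}(s)$. Your version is actually more careful than the paper's, which treats a single Newton step and simply asserts that ``the elements of $R_k$ are continuous in $s$'' without explicitly running the induction through the dependence $R_k = R_k(a_k(s),s)$ or flagging the issue of an $s$-dependent iteration count.
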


\begin{proof}
We can write the KKT system associated with the constrained least squares problem \eqref{eq:newton} as
\begin{equation}
\label{eq:kkt}
\bmat{R_k^T R_k & e \\ e^T & 0}\bmat{a\\ \lambda}=\bmat{\mathbf{0} \\ 1},
\end{equation}
where $\lambda=\lambda(s)$ is the Lagrange multiplier. The full rank assumption on $[R_k^T,e]^T$ implies that the KKT
matrix is invertible for all $s\in\sS$, and we can write
\begin{equation}
\bmat{a\\ \lambda}=\bmat{R_k^TR_k & e \\ e^T & 0}^{-1}\bmat{\mathbf{0} \\ 1}.
\end{equation}
Since the elements of $R_k$ are continuous in $s$, so are the elements of the inverse of the KKT system, which implies
that $a(s)$ is continuous in $s$, as required.
\end{proof}

\subsection{Convergence and Conditioning}

The convergence analysis we present is somewhat nonstandard. As opposed to computing an a priori rate of convergence,
we show in the next theorem that the minimum residual decreases monotonically with each added basis. 

\begin{theorem}
\label{thm:addpoint}
Let
\begin{equation}
\label{eq:nbases}
X^{(n)} = \bmat{X_1\\ \vdots \\ X_m} 
\end{equation}
be a given basis with $n$ columns, and let $a_n^\ast$ be the minimizer of \eqref{eq:newton} with associated minimum
function value $\rho^\ast_n=\rho(a_n^\ast,s)$. Let $s_{n+1}\in\sS$ and 
\begin{equation}
x_{n+1} = \bmat{x(t_1,s_{n+1})\\ \vdots \\ x(t_m,s_{n+1})}.
\end{equation}
Define the updated basis 
\begin{equation}
X^{(n+1)}=\bmat{X^{(n)}& x_{n+1}}.
\end{equation}
Let $a_{n+1}^\ast$ be the minimizer of \eqref{eq:newton} with the basis $X^{(n+1)}$, and let
$\rho^\ast_{n+1}=\rho(a_{n+1}^\ast,s)$ be the minimum function value. Then there is an $0\leq \alpha\leq
1$ such that
\begin{equation}
\rho^\ast_{n+1} = \alpha\,\rho^\ast_n,
\end{equation}
for all $s\in\sS$. 
\end{theorem}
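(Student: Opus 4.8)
The plan is to prove the pointwise inequality $0 \le \rho^\ast_{n+1} \le \rho^\ast_n$ for every $s\in\sS$ — which is equivalent to the stated claim — by a nesting-of-feasible-sets argument, and then to read off $\alpha$ as a ratio. It is convenient to regard $\rho^\ast_n$ and $\rho^\ast_{n+1}$ as the minimum values of the constrained nonlinear least squares problem \eqref{eq:nlls} built from the bases $X^{(n)}$ and $X^{(n+1)}$ respectively; existence of a minimizer has already been assumed for \eqref{eq:nlls}, so both quantities are well defined, and both are nonnegative since the objective is a weighted sum of squared norms.

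The key step is the observation that padding a coefficient vector with a trailing zero maps the feasible set of the $n$-column problem into that of the $(n+1)$-column problem while preserving the objective value. Writing the enlarged basis as $X_i^{(n+1)} = \bmat{X_i^{(n)} & x(t_i,s_{n+1})}$ and its forcing counterpart as $F_i^{(n+1)} = \bmat{F_i^{(n)} & f(x(t_i,s_{n+1}),t_i,s_{n+1})}$, for any $a\in\mathbb{R}^n$ with $e^Ta=1$ we have $e^T\bmat{a\\0} = 1$, together with $X_i^{(n+1)}\bmat{a\\0} = X_i^{(n)}a$ and $F_i^{(n+1)}\bmat{a\\0} = F_i^{(n)}a$ for every $i$; hence the $(n+1)$-dimensional objective evaluated at $\bmat{a\\0}$ equals $\rho(a,s)$. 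Taking $a = a_n^\ast$ yields a feasible point for the larger problem with objective value exactly $\rho^\ast_n$, so $\rho^\ast_{n+1} \le \rho^\ast_n$. Combined with $\rho^\ast_{n+1}\ge 0$, this gives $0\le\rho^\ast_{n+1}\le\rho^\ast_n$.

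To reach the multiplicative form, fix $s\in\sS$. If $\rho^\ast_n > 0$, set $\alpha = \rho^\ast_{n+1}/\rho^\ast_n$, which lies in $[0,1]$ by the inequality just proved; if $\rho^\ast_n = 0$, then $\rho^\ast_{n+1} = 0$ as well and any $\alpha\in[0,1]$ works. In either case there is $\alpha=\alpha(s)\in[0,1]$ with $\rho^\ast_{n+1} = \alpha\,\rho^\ast_n$, and since $s$ was arbitrary this holds for all $s\in\sS$ (with $\alpha$ depending on $s$ in general).

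I do not expect a real obstacle: the entire content is the zero-padding embedding. The one point worth stating carefully is the meaning of ``minimizer.'' The inequality $\rho^\ast_{n+1}\le\rho^\ast_n$ is clean precisely when $\rho^\ast_n$ and $\rho^\ast_{n+1}$ denote the global minimum values of \eqref{eq:nlls}, since then the embedded point $\bmat{a_n^\ast\\0}$ can only be improved upon; if one instead compares local minimizers returned by the Newton iteration from arbitrary initial guesses, monotonicity need not hold, because an added basis direction can turn a local minimum into a saddle. I would therefore state and prove the theorem for the global minimizers, consistent with how existence is invoked elsewhere in this section.
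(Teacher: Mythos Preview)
Your proof is correct and follows essentially the same approach as the paper: the zero-padding embedding $a\mapsto\bmat{a^T&0}^T$ to obtain $\rho^\ast_{n+1}\le\rho^\ast_n$, together with nonnegativity of the objective. The paper additionally verifies explicitly that at the new parameter point $s=s_{n+1}$ one has $\rho^\ast_{n+1}=0$ (by taking $a$ to be the last coordinate vector), illustrating that $\alpha$ can attain zero, but this is not needed for the stated inequality and your argument is complete without it.
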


\begin{proof}
Define 
\begin{equation}
a=\bmat{a_n^\ast\\ 0},
\end{equation}
and note that $e^Ta=1$. Then
\begin{equation}
\rho^\ast_{n+1}\;\leq\;\rho(a,s)\;=\;\rho^\ast_n.
\end{equation}
Next let $s=s_{n+1}$, and let $a$ be a $n+1$-vector of zeros with a 1 in the last entry. Then 
\begin{equation}
\rho^\ast_{n+1}\;=\;\rho(a,s_{n+1})\;=\;0,
\end{equation}
as required.
\end{proof}

In words, the minimum residual decreases monotonically as bases are added to the approximation.
Since $s$ is arbitrary, Theorem \ref{thm:addpoint} says that each basis added to the approximation reduces the
residual norm \emph{globally} over the parameter space; in the worst case, it does no harm, and in the best case it
achieves the true solution. This result is similar to Theorem 2.2 from~\cite{Bui08}. 

In the final theorem of this section, we show that achieving a small residual requires the solution of an
ill-conditioned constrained least squares problem for the Newton step. To do this, we first reshape the constrained
least squares problem using the standard null space method~\cite{Bjorck96}, and then we apply a result
from~\cite{VanHuffel91} relating the minimum norm of the residual to the minimum singular value of the data matrix.

\begin{theorem}
\label{thm:illcon}
Let $R_k$ be the matrix from the constrained least squares problem for the Newton step \eqref{eq:newton}, and define
$\smin(R_k)$ to be its minimum singular value. Then
\begin{equation}
\smin(R_k) \;\leq\;
\sqrt{n}\left(
\|\sJ\|\|a_{k+1}-a_k\| + \sqrt{\rho(a_k)}
\right).
\end{equation}
\end{theorem}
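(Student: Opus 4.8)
The plan is to use the feasibility of the Newton iterate $a_{k+1}$ for the constrained problem \eqref{eq:newton} together with the already-established residual bound \eqref{eq:residbound}; this is a slightly more direct route than the null-space reduction mentioned above. If $R_k$ is rank deficient then $\smin(R_k)=0$ and the inequality is immediate, since its right-hand side is nonnegative, so assume $R_k$ has full column rank. Then, passing to the singular value decomposition $R_k=U\Sigma V^T$, one has the elementary estimate $\|R_kv\|\geq\smin(R_k)\,\|v\|$ for every $v\in\mathbb{R}^n$, because $\|R_kv\|^2=\sum_{i}\sigma_i^2(V^Tv)_i^2\geq\smin(R_k)^2\|v\|^2$.

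Next I would get a lower bound on $\|a_{k+1}\|$ from the constraint. Since $e^Ta_{k+1}=1$ and $\|e\|=\sqrt{n}$, the Cauchy--Schwarz inequality gives $1=e^Ta_{k+1}\leq\sqrt{n}\,\|a_{k+1}\|$, so $\|a_{k+1}\|\geq 1/\sqrt{n}$. Combining this with the previous estimate and the definition $r_{k+1}=R_ka_{k+1}$ yields
\begin{equation*}
\|r_{k+1}\|\;=\;\|R_ka_{k+1}\|\;\geq\;\smin(R_k)\,\|a_{k+1}\|\;\geq\;\frac{\smin(R_k)}{\sqrt{n}}.
\end{equation*}

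Finally I would invoke the residual bound \eqref{eq:residbound}, namely $\|r_{k+1}\|\leq\|\sJ_k\|\|a_{k+1}-a_k\|+\sqrt{\rho(a_k)}$, and rearrange to obtain $\smin(R_k)\leq\sqrt{n}\big(\|\sJ_k\|\|a_{k+1}-a_k\|+\sqrt{\rho(a_k)}\big)$, which is the assertion. I do not anticipate a real obstacle here: the only points requiring care are the reduction to the full-column-rank case and the convention that $\smin$ denotes the smallest (the $n$th) singular value of the tall matrix $R_k$. An alternative is to follow the route sketched just before the theorem — reduce \eqref{eq:newton} to an unconstrained least squares problem by the null-space method and apply the bound of~\cite{VanHuffel91} — but that approach carries the extra burden of relating $\smin$ of the reduced data matrix back to $\smin(R_k)$, which the argument above sidesteps.
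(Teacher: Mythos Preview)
Your proof is correct and reaches the same intermediate inequality $\smin(R_k)\leq\sqrt{n}\,\|R_ka_{k+1}\|$ that the paper obtains, but by a genuinely different and more elementary route. The paper proceeds as you anticipate in your closing remark: it reduces \eqref{eq:newton} to an unconstrained problem via the null-space method (QR factorization of $e$), applies Van~Huffel's bound $\smin([b\gamma,A])/\gamma\leq\|b-Au^\ast\|$ with $\gamma=\sqrt{n}$, and then observes that the augmented data matrix $[n^{-1/2}Re,\,RQ_2]$ is exactly $RQ$, so its singular values agree with those of $R$. Your argument replaces all of this with two lines---the basic SVD estimate $\|R_kv\|\geq\smin(R_k)\|v\|$ and Cauchy--Schwarz applied to the constraint $e^Ta_{k+1}=1$---and then both proofs finish identically via \eqref{eq:residbound}. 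Your version is shorter and self-contained, needing neither the null-space reduction nor the cited external result; the paper's route makes slightly more explicit how the constraint geometry enters, but at the cost of the extra machinery. One minor note: the rank-deficient case split is harmless but not really needed, since $\|R_kv\|\geq\smin(R_k)\|v\|$ holds for any tall matrix with $\smin$ taken as the $n$th singular value.
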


\begin{proof}
Let $R=R_k$ be defined as in \eqref{eq:defRk}. To solve the constrained least squares problem \eqref{eq:newton} via the
nullspace method, we take a QR factorization of the constraint vector
\begin{equation}
Q^T e = \sqrt{n} e_1,
\end{equation}
where $e_1$ is an $n$-vector of zeros with a one in the first entry. Partition $Q = [q_1,Q_2]$, where $q_1$ is the first
column of $Q$. The $n\times(n-1)$ matrix $Q_2$ is a basis for the null space of the constraint. It can be shown that
$q_1=n^{-1/2}e$. Using this transformation, the constrained least squares problem becomes the unconstrained least
squares problem
\begin{equation}
\label{eq:unconstrained}
\begin{array}{ll}
\mini{v} & \|RQ_2v + \frac{1}{n}Re\|.
\end{array}
\end{equation}
Then the Newton update is given by
\begin{equation}
a_{k+1} = \frac{1}{n}e + Q_2v^\ast,
\end{equation}
where $v^\ast$ is the minimizer of \eqref{eq:unconstrained}. For a general overdetermined least squares problem
$\minn{u}\;\|Au-b\|$, Van Huffel~\cite{VanHuffel91} shows that for $\gamma>0$,
\begin{equation}
\frac{\smin([b\gamma,A])}{\gamma}\leq \|b-Au^\ast\|,
\end{equation}
where $u^\ast$ is the minimizer; we apply this result to \eqref{eq:unconstrained}. Taking $\gamma=\sqrt{n}$, we have
\begin{equation}
\left[\frac{1}{\sqrt{n}}Re,RQ_2\right] = RQ.
\end{equation}
Since $Q$ is orthogonal, $\smin(RQ)=\smin(R)$. Then
\begin{equation}
\smin(R) \;\leq\; \sqrt{n} \left\|RQ_2v^\ast + \frac{1}{n}Re\right\| \;=\; \sqrt{n} \|Ra_{k+1}\|.
\end{equation}
Combining this result with the bound on the residual \eqref{eq:residbound} achieves the desired result.
\end{proof}

To reiterate, near the solution of the nonlinear least squares problem \eqref{eq:nlls}, we expect $\|\sJ\|$ to be
bounded and the difference $\|a_{k+1}-a_k\|$ to be small so that the first term in the bound becomes negligible.
Therefore, a solution with a small residual norm $\rho(a)$ will require the solution of a constrained least squares
problem with a small minimum singular value, which implies a large condition number. To combat this, we propose some
heuristics for dealing with the ill-conditioning of the problem in the following section.

\section{Computational Heuristics}
\label{sec:heuristics}

In this section, we propose heuristics for (1) reducing the cost of constructing the interpolant, (2) mitigating
the ill-conditioning of the least squares problems, and (3) adding new bases to the approximation.

\subsection{Cost Reduction}
\label{sec:cred}

The reader may have noticed that, if we measure the cost of the residual minimizing scheme in terms of number of
evaluations of $f$ from \eqref{eq:model} (assuming we use the finite difference Jacobian described in Section
\ref{sec:fdjacob}), then computing the approximation can be more expensive than evaluating the true model. Specifically,
an explicit one-step time stepping scheme evaluated at the time discretization $t_1,\dots,t_m$ requires
$m$ evaluations of $f$. However, merely constructing the matrix $G$ in \eqref{eq:rzero} at the same time
discretization to compute the initial guess requires $mn$ evaluations of $f$ for $n$ bases. And if the elements of the
matrix $F$ were not stored during the runs, computing them requires another $mn$ evaluations of $f$. On top of that,
each Newton step requires yet another $mn$ evaluations. This simple cost analysis would seem to discourage us from
comparing the interpolation method to other methods for model reduction.

The interpolation method becomes an appropriate method for model reduction when one is interested in a small subset of
the time domain. For example, if a quantity of interest is computed as a function of the state at some final time, then
the interpolation method can be used to approximate the state at the final time at a new parameter value without
computing the full history. This is particularly useful for dynamical systems with rapidly varying initial transients
that require small time steps. With the interpolation method, one need not resolve the initial transients with a
small time step to approximate the state at the final time. Instead, the method takes advantage of regularity in the
parameter space to construct the interpolating approximation. 

Alternatively, if one were interested in a minimal set of points in time for coarse approximation of the history, these
could be determined with a method similar to the discrete empirical interpolation method~\cite{Chaturantabut10} in the
time domain. With this set of discretization points, one could approximate the time history at a new parameter value
without the full model solver. We do not pursue this idea further in this work, but we believe it holds promise for
approximating time-averaged quantities of interest.

This sort of reduction is applicable to the number of points $m$ in the time domain. In the parameter domain, the number
of points $n$ corresponds to the number of full model solutions used in the affine combination \eqref{eq:linmodel}.
Typically, we think that $n$ is small due to the cost of the full model solution. However, in cases where a state
vector may be represented by an even smaller set of basis vectors, we can borrow an idea from moving (or windowed)
least squares~\cite{Wendland05} to reduce the number of columns in the solves for the Newton steps \eqref{eq:newton}. 

Choose an integer $M<n$. For a parameter point $s$, find the $M$ points in the set of $\{s_j\}$ that are nearest to $s$.
Then use only the $x_j$ corresponding to nearby $s_j$ to construct the least squares approximations. In the numerical
examples, we demonstrate the savings generated by this heuristic.

\subsection{Alleviating Ill-Conditioning}
\label{sec:illcon}

In Theorem \ref{thm:illcon}, we showed that achieving a small residual required the solution of an ill-conditioned least
squares problem for the Newton step. Here we offer a heuristic for alleviating the effects of the ill-conditioning.
The heart of the residual minimizing scheme is the constrained linear least squares problem \eqref{eq:newton} which we
rewrite with a general $m\times n$ matrix $R$ as
\begin{equation}
\label{eq:cls}
\begin{array}{ll}
\mini{a} & \| Ra \| \\
\st       & e^Ta= 1.
\end{array}
\end{equation}
For now we assume that $R$ is full rank, although it may have a very large condition number. The particular form of the
least squares problem (the single linear constraint $e$ and the zero right hand side) will permit us to use some novel
approaches for dealing with the ill-conditioning.

To analyze this problem, we first derive a few useful expressions. Let $\lambda$ be the Lagrange multiplier
associated with the constraint. The minimizer $[\astar^T,-\lstar]^T$ satisfies the KKT conditions
\begin{equation}
\bmat{R^TR & e\\ e^T & 0}\bmat{\astar\\ -\lstar} = \bmat{0\\ 1}.
\end{equation}
We can use a Schur complement (or block elimination) method to derive expressions for the solution: 
\begin{equation}
\lstar = \frac{1}{e^T(R^TR)^{-1}e},\qquad
\astar = \lstar(R^TR)^{-1}e.
\end{equation}
Also note that the first KKT equation gives
\begin{equation}
R^TR\astar = \lstar e.
\end{equation}
Premultiplying this by $\astar^T$ , we get
\begin{equation}
\astar^TR^TR\astar = \lstar \underbrace{\astar^Te}_{=\,1},
\end{equation}
or equivalently
\begin{equation}
\label{eq:normlagrange}
\|R\astar\|^2 = \lstar.
\end{equation}
In other words, the value of the optimal Lagrange multiplier is the squared norm of the residual. We can use this fact
to improve the conditioning of computing the approximation while maintaining a small residual.

Since we wish to minimize $\|Ra\|$, it
is natural to look for a linear combination of the right singular vectors of $R$ associated with the smallest singular
values. We first compute the thin singular value decomposition (SVD) of $R$ as
\begin{equation}
R=U\Sigma V^T \qquad \mathrm{diag}(\Sigma)=[\sigma_1,\dots,\sigma_n],
\end{equation}
and rotate \eqref{eq:cls} by the right singular vectors. Define
\begin{equation}
\ahat = V^Ta, \qquad d = V^Te.
\end{equation}
Then \eqref{eq:cls} becomes
\begin{equation}
\begin{array}{ll}
\mini{a} & \| \Sigma \ahat \| \\
\st       & d^T\ahat= 1,
\end{array}
\end{equation}
with associated KKT conditions
\begin{equation}
\bmat{\Sigma^2 & d\\ d^T & 0}\bmat{\ahatstar\\ -\lstar} = \bmat{0\\ 1}.
\end{equation}
Applying block elimination to solve this system involves inverting $\Sigma^2$ and multiplying by $d$ -- an operation
with a condition number of $(\sigma_1/\sigma_n)^2$. We can improve the conditioning by solving a truncated version of
the problem. Let $k<n$ be a truncation and partition
\begin{equation}
\Sigma = \bmat{\Sigma_1 & \\ & \Sigma_2}
\qquad V=\bmat{V_1 & V_2} 
\qquad d=\bmat{d_1\\ d_2} 
\qquad \ahat=\bmat{\ahat_1\\\ahat_2}
\end{equation}
according to $k$. Since we want to minimize the residual $\|\Sigma\ahat\|$ and the largest values of $\Sigma$ appear at
the top, we set $\ahatonestar=0$ and solve
\begin{equation}
\bmat{\Sigma_2^2 & d_2\\ d_2^T & 0}\bmat{\ahattwostart\\ -\lstart} = \bmat{0\\ 1},
\end{equation}
where the superscript $(t)$ is for \emph{truncated}; the condition number of inverting and multiplying by
$\Sigma_2^2$ is $(\sigma_{k+1}/\sigma_n)^2$. Then the linear combination of the right singular vectors associated with
the smallest singular values is
\begin{equation}
\astar^{(t)} = V_2^T\ahattwostart.
\end{equation}
We can measure what was lost in the truncation in terms of the minimum residual by looking at the difference between the
Lagrange multipliers
\begin{equation}
\label{eq:residdiff}
\left| \|R\astar\|^2 - \|R\astar^{(t)}\|^2 \right| \;=\; |\lstar - \lstart|.
\end{equation}
And this suggests a way to choose the truncation a priori. Once the SVD of $R$ has been computed, we can trivially check the
minimum residual norm by computing the Lagrange multipliers for various truncations; see (\ref{eq:normlagrange}). We
want to truncate to reduce the condition number of the problem and avoid disastrous numerical errors. But we do not
want to truncate so much that we lose accuracy in the approximation. Therefore, we propose the following. For
$j=1,\dots,n$, compute
\begin{equation*}
\begin{array}{rll}
d_j &= [d_{n-j+1},\dots,d_n]^T &\mbox{the last $j$ elements of $d$}\\
\Sigma_j &= \mathrm{diag}(\sigma_{n-j+1},\dots,\sigma_n) &\mbox{the last $j$ singular values}\\
y_j &= \Sigma_j^{-1}d_j & \\
\lambda_j &= \frac{1}{y_j^Ty_j} &
\end{array}
\end{equation*}
The $\lambda_j$ measure the norm of the residual for a truncation that retains the last $j$ singular
values; $\lambda_n$ is the true minimum residual. Therefore, we seek the smallest $k$ such that
\begin{equation}
|\lambda_k - \lambda_n| < \tau
\end{equation}
for a given tolerance $\tau$ representing how much minimum residual we are willing to sacrifice for better conditioning,
and this determines our truncation.

\subsubsection{A Note on Rank Deficiency}

There is an interesting tension in the constrained least squares problem \eqref{eq:cls}. The ideal scenario would be to
find a vector in the null space of $R$ that also satisfies the constraint. This implies that we want $R$ to be rank
deficient for the sake of the approximation. However, if the matrix $[R^T,e]^T$ is rank deficient, then there are
infinite solutions for the constrained least squares problem~\cite{Bjorck96}. To distinguish between these two
potential types of rank deficiency, we apply the following. Suppose $R$ is rank deficient and we have partitioned
\begin{equation}
R = 
\bmat{U_1 & U_2}\bmat{\Sigma_1& \\ & 0}\bmat{V_1^T\\ V_2^T}.
\end{equation}
Next compute $d=V_2^Te$. All components of $d$ equal to zero correspond to singular vectors that are in the null space
in the constraint; we want to avoid these. Suppose
\begin{equation}
d \;=\; \bmat{d_1\\ 0} \;=\; \bmat{V_{2,1}^Te\\ V_{2,2}^Te}.
\end{equation}
Then we can choose any linear combination of the vectors $V_{2,1}$ that satisfies the constraint to achieve a zero
residual; one simple choice is to take the component-wise average of the vectors $V_{2,1}$.

If all of $d=0$, then we must apply a method for the rank deficient constrained least squares problem, such as the
null space method to transform the \eqref{eq:cls} to an unconstrained problem coupled with the pseudoinverse to compute
the minimum norm solution~\cite{Bjorck96}.

The drawback of this SVD-based approach is that each Newton step requires computing the SVD. And we
expect that most parameter and uncertainty studies will require many such evaluations. Therefore we are actively
pursuing more efficient methods for solving \eqref{eq:cls}.

\subsection{Adding Bases}
\label{sec:addbases}

When doing a parameter study on a complex engineering system, the question of where in the parameter space to compute
the solution arises frequently. In the context of this residual minimizing model interpolation, we have a natural method
and metric for answering this question. Let $\rho^\ast=\rho^\ast(s)$ be the minimum objective function value from the
nonlinear least squares problem \eqref{eq:nlls} with a given basis. Then the point $s_{n+1}\in\sS$ to next
evaluate the full model is the \emph{maximizer} of
\begin{equation}
\label{eq:nextbasis}
\begin{array}{ll}
\maxi{s\in\sS} & \rho^\ast(s),
\end{array}
\end{equation}
so that $x_{n+1}(t)=x(t,s_{n+1})$. Of course, the optimization problem \eqref{eq:nextbasis} is in general non-concave,
and derivatives with respect to the parameters $s$ are often not available. However, we are comforted by the fact
that \emph{any} point in the parameter space with a positive value for $\rho^\ast$ will yield a basis element that will
improve the approximation; see Theorem \ref{thm:addpoint}. We therefore expect
derivative-free global optimization heuristics~\cite{Kolda03} to perform sufficiently well for many applications. The
idea of maximizing the residual over the parameter space to compute the next full model solution was also proposed
in~\cite{Bui08} in the context of reduced order modeling, and they present a thorough treatment of the resulting
optimization problem -- including related greedy sampling techniques~\cite{Grepl05}.
 
There are many possible variations on \eqref{eq:nextbasis}. We have written the optimization to chose a single basis to
add. However, multiple independent optimizations could be run, and a subset of the computed optima could be added to
accelerate convergence.

\section{Numerical Examples}
\label{sec:examples}

We numerically study two models in this section: (i) a three-state nonlinear dynamical system representing a chemical
kinetics mechanism with a single input parameter controlling the stiffness of the system, and (ii) a
nonlinear, parabolic PDE modeling conductive heat transfer with a temperature dependent random field model of the
thermal conductivity.
The first problem is a toy model used to explore and confirm properties of the approximation scheme; we do not attempt
any reduction in this case. The second example represents a step toward a large scale application in need of model
reduction. All numerical experiments were performed on a dual quad-core Intel W5590 with 12GB of RAM running Ubuntu and
MATLAB 2011b. Scripts for reproducing the experiments can be found at \url{www.stanford.edu/~paulcon/rmmi.html}; the
second experiment requires the MATLAB PDE Toolbox.

\subsection{A 3-Species Kinetics Problem}

The following model from~\cite{Valorani05} contains the relevant features of a stiff chemical kinetics mechanism. Let 
\begin{equation}
x(t,s) = 
\bmat{u(t,s)\\ v(t,s)\\ w(t,s)}
\end{equation}
be the three state variables with evolution described by the function
\begin{equation}
\label{eq:kinforce}
f(x,t,s) = 
\bmat{
-5\frac{u}{s} - \frac{uv}{s} + vw + 5\frac{v^2}{s} + \frac{w}{s} - u\\
10\frac{u}{s} - \frac{uv}{s} - vw - 10\frac{v^2}{s} + \frac{w}{s} + u\\
\frac{uv}{s} - vw - \frac{w}{s} + u
},
\end{equation}
where the parameter $s$ controls the stiffness of the system; the initial state is $u_0=v_0=w_0=0.5$. We examine the
parameter range $s\in[0.005, 1.2]$, where the smaller value of $s$ corresponds to a stiffer system. Given $s$, we solve
for the evolution of the states using MATLAB's \texttt{ode45} routine from $t=0$ to $t=1$. The ODE solver has its own
adaptive time stepping method. We extract a discrete time process on 300 equally spaced points in the time interval
$[0,1]$; denote these points in time by $t_j$ where
\begin{equation}
t_j = j\Delta t, \qquad \Delta t = \frac{1}{300}.
\end{equation}
In Figure \ref{fig:kinetics}, we plot the evolution of each component for $s=0.005$ and $s=1.2$.
We can see that the stiffness of the equation yields rapidly varying initial transients for $s$ near zero,
which makes this a challenging problem.

\begin{figure}
\begin{center}
\subfloat[]{
\includegraphics[scale=0.32]{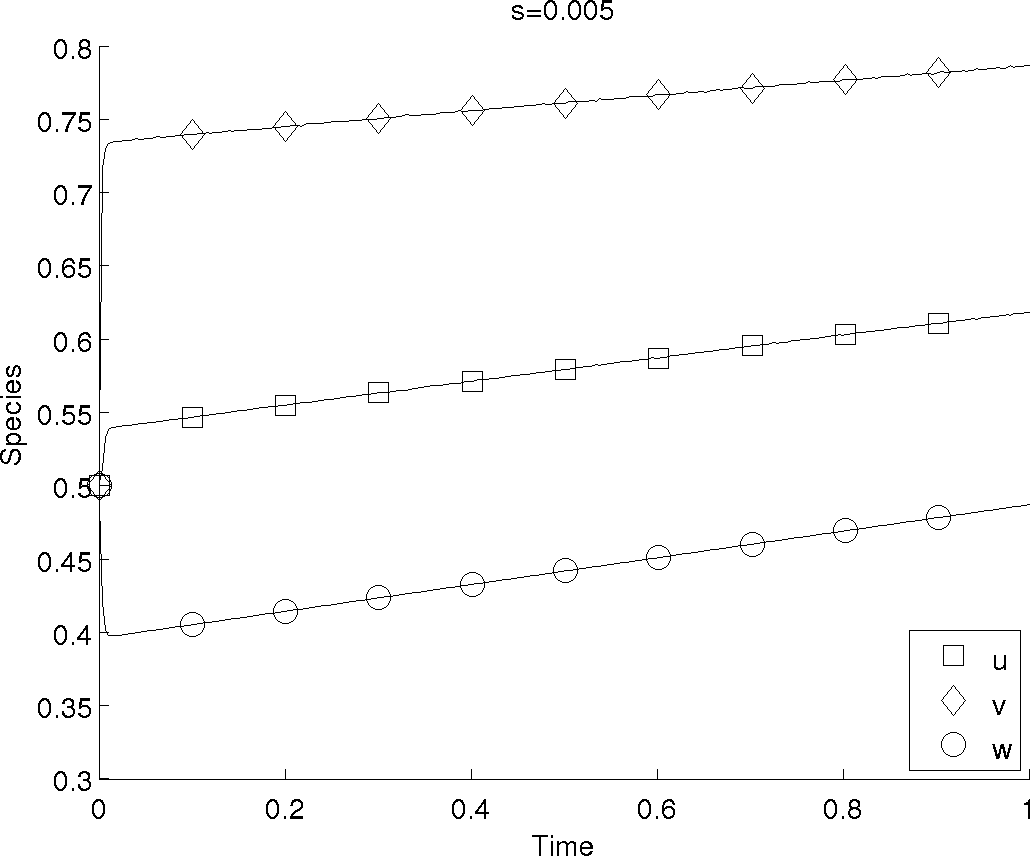}
\label{fig:stiffy}
}
\subfloat[]{
\includegraphics[scale=0.32]{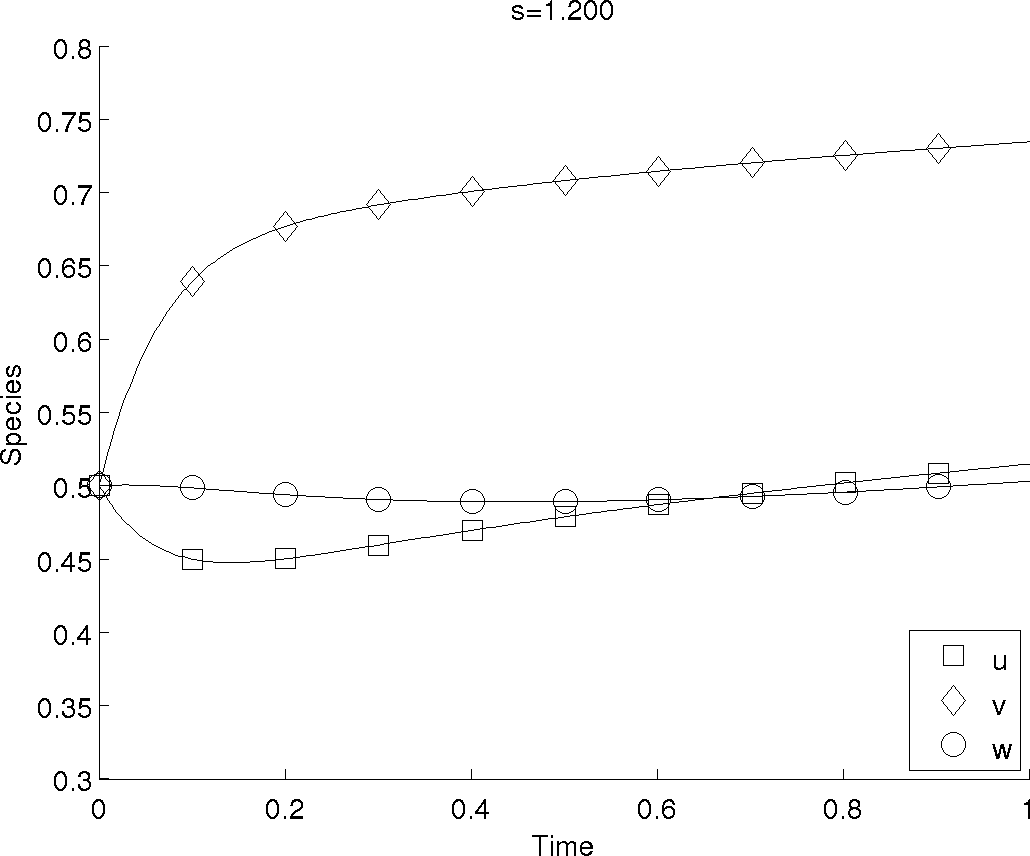}
\label{fig:nonstiffy}
}
\end{center}
\caption{The evolution of the states described by \protect\eqref{eq:kinforce} for \protect\subref{fig:stiffy} $s=0.005$
corresponding to a stiff system and \protect\subref{fig:nonstiffy} $s=1.200$ corresponding to a nonstiff system.}
\label{fig:kinetics}
\end{figure}

We approximate the average error and average minimum residual over the parameter space by using 300 equally
spaced points in the parameter range $[0.005,1.2]$; denote these points by 
\begin{equation}
s_k = 0.005 + k\Delta s,\qquad \Delta s = \frac{1.195}{300}. 
\end{equation}
The approximate average error and minimum residual are computed as
\begin{align}
\mathcal{E} &= \Delta s\,\Delta t\, \sum_k \sum_j \|\tilde{x}(t_j,s_k) - x(t_j,s_k)\|, \label{eq:EE}\\
\mathcal{R} &= \Delta s\,\Delta t\, \sum_k \sum_j \|\ddt{\tilde{x}}(t_j,s_k) - f(\tilde{x}(t_j,s_k),s_k)\|.
\label{eq:RR}
\end{align}
We examine the reduction in $\mathcal{E}$ and $\mathcal{R}$ as more bases are added according to the heuristic in
Section \ref{sec:addbases}, and we compare that with the lower bound from Theorem \ref{thm:approx}. We compute the lower
bound from Theorem \ref{thm:approx} (the right hand side of \eqref{eq:lowerbound}) using a Gauss-Legendre quadrature
rule with 9,600 points in the range of $s$, which was sufficient to obtain converged eigenvalues. 

We also use the
discretization with $s_k$ to approximate the optimization problem \eqref{eq:nextbasis} used to select additional basis
elements.
In other words, for a given basis we compute the minimum residual at each $s_k$, and we append to the basis the
true solution corresponding to the $s_k$ that yields the largest minimum residual. We begin with 2 bases -- one at each
end point of the parameter space -- and stop after 40 have been added. $\mathcal{E}$ and $\mathcal{R}$ appear in Figure
\ref{fig:convergence} compared to the computed lower bound. We see that the error behaves roughly like the lower bound.
For each evaluation of the reduced order model, we track the number of Newton iterations, and we plot the average over
the 300 $s_k$ in Figure \ref{fig:convergence}. Notice that the number of necessary Newton iterations
decreases as the number of basis functions increases.

\begin{figure}
\begin{center}
\subfloat[]{
\includegraphics[scale=0.32]{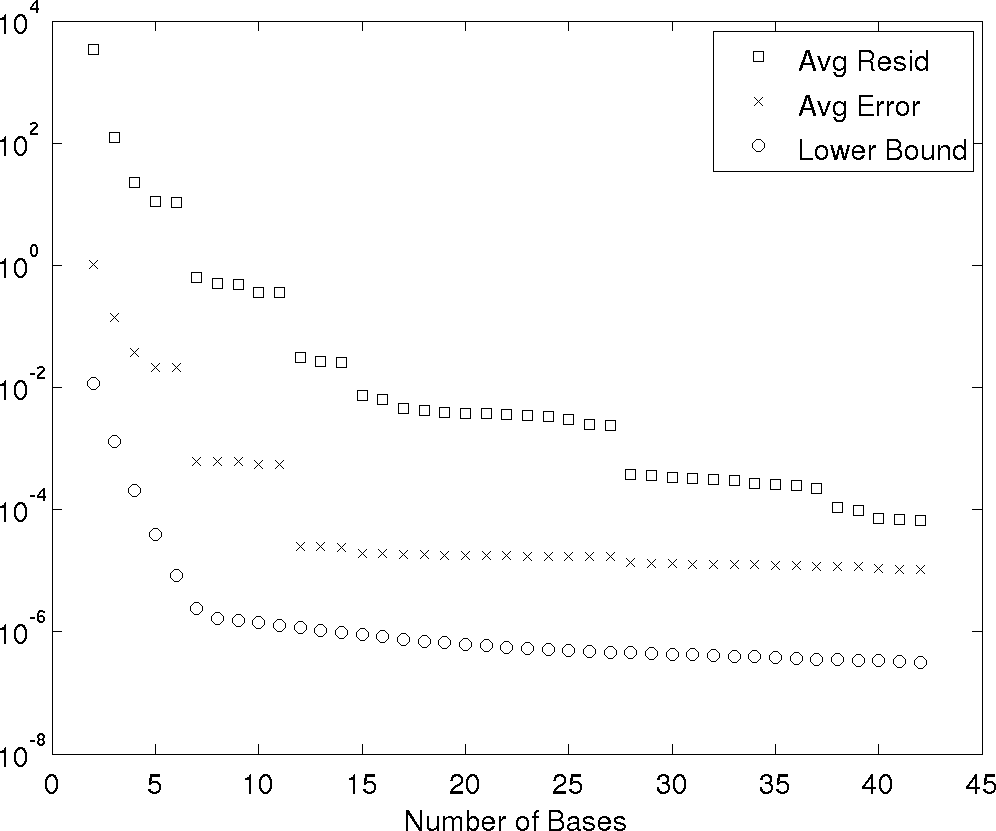}
\label{fig:conv}
}
\subfloat[]{
\includegraphics[scale=0.32]{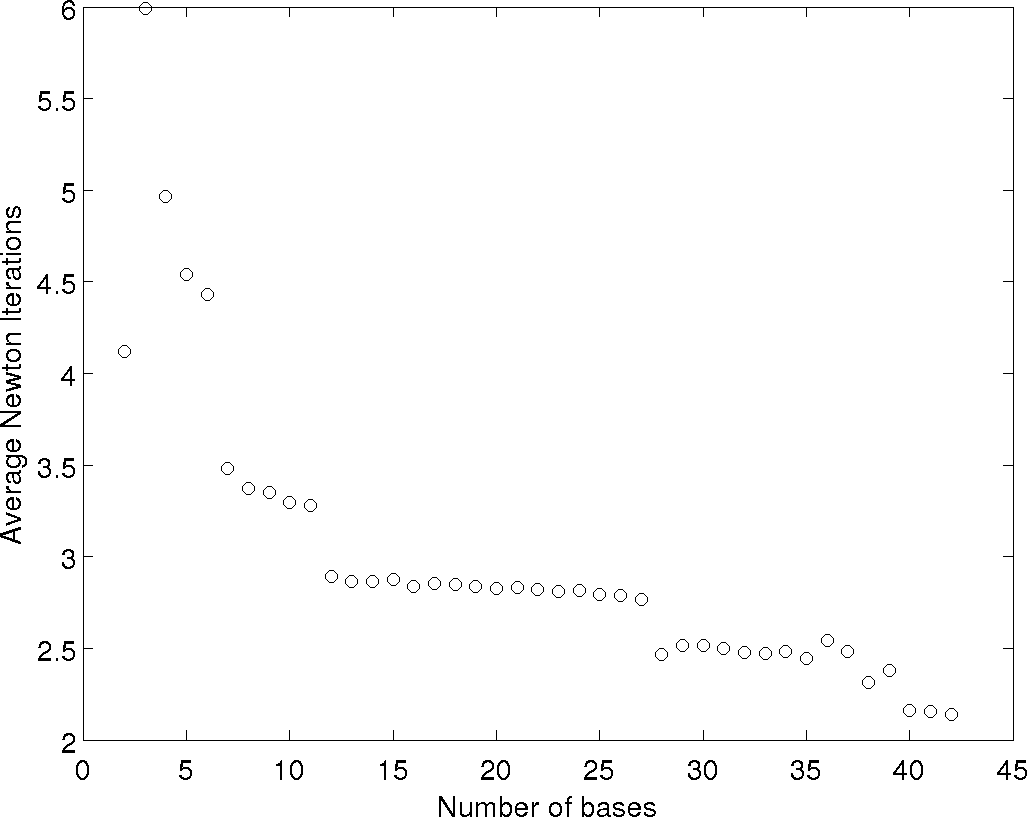}
\label{fig:newtiter}
}
\end{center}
\caption{\protect\subref{fig:conv} Reduction in average minimum residual $\mathcal{R}$ \protect\eqref{eq:RR}, average
error $\mathcal{E}$ \protect\eqref{eq:EE}, and lower bound from \protect\eqref{eq:lowerbound} as bases are added
according the heuristic in Section \protect\ref{sec:addbases}. \protect\subref{fig:newtiter} Average number of necessary
Newton iterations as basis elements are added.}
\label{fig:convergence}
\end{figure}

To confirm the results of Theorem \ref{thm:illcon}, we plot the minimum residual at each parameter point with the
maximum condition number of the matrices $R_k$ from \eqref{eq:newton} used to compute the Newton steps. In Figure
\ref{fig:illcon} we show this plot for 5, 10, 20, and 40 bases. We clearly see an inverse relationship between the
condition number and the minimum residual, which is consistent with Theorem \ref{thm:illcon}.

\begin{figure}
\begin{center}
\subfloat[]{
\includegraphics[scale=0.32]{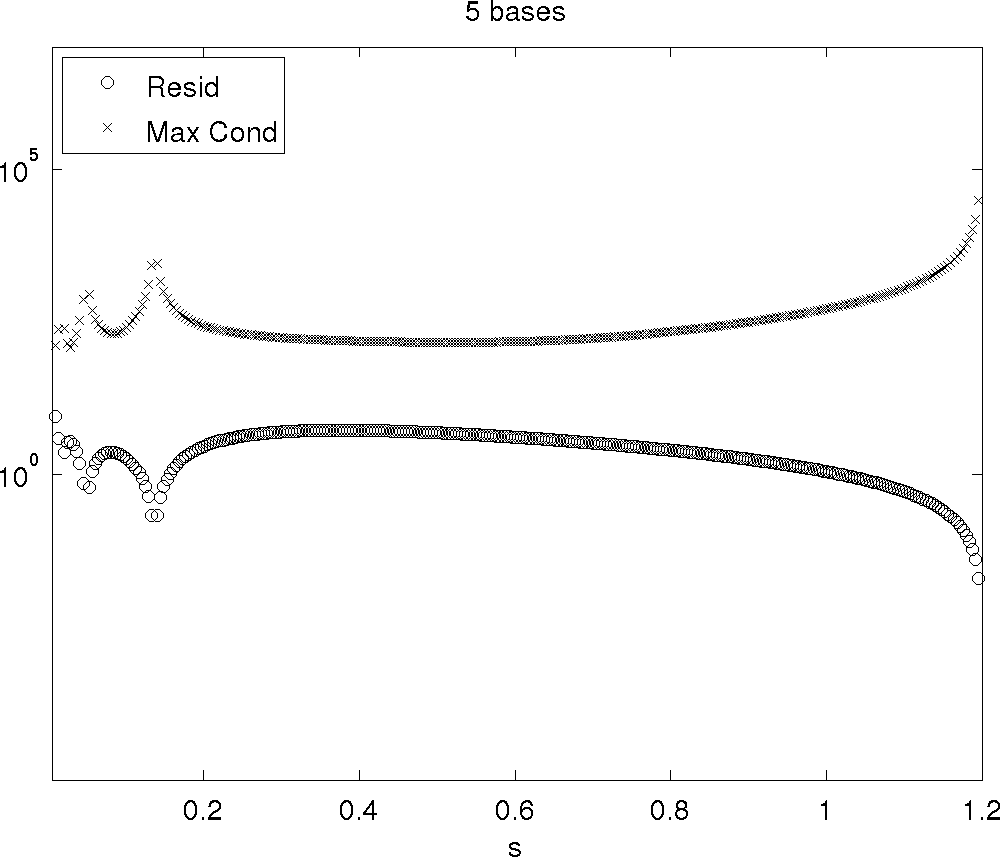}
\label{fig:res5}
}
\subfloat[]{
\includegraphics[scale=0.32]{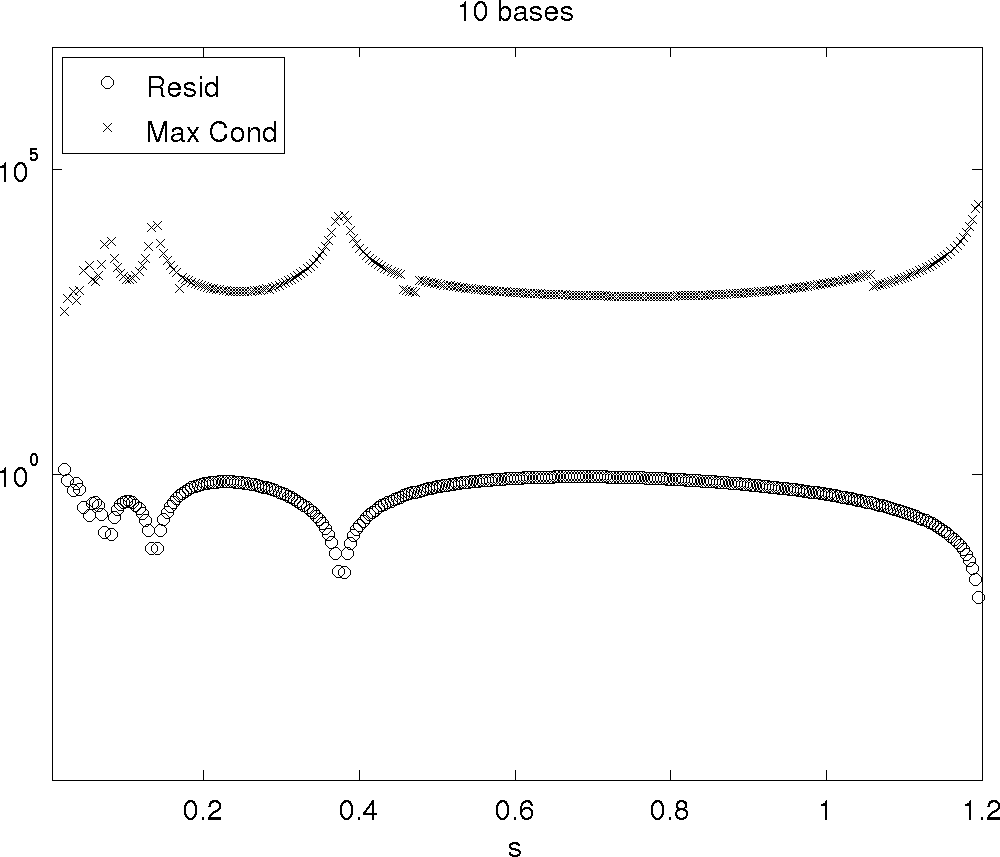}
\label{fig:res10}
}
\\
\subfloat[]{
\includegraphics[scale=0.32]{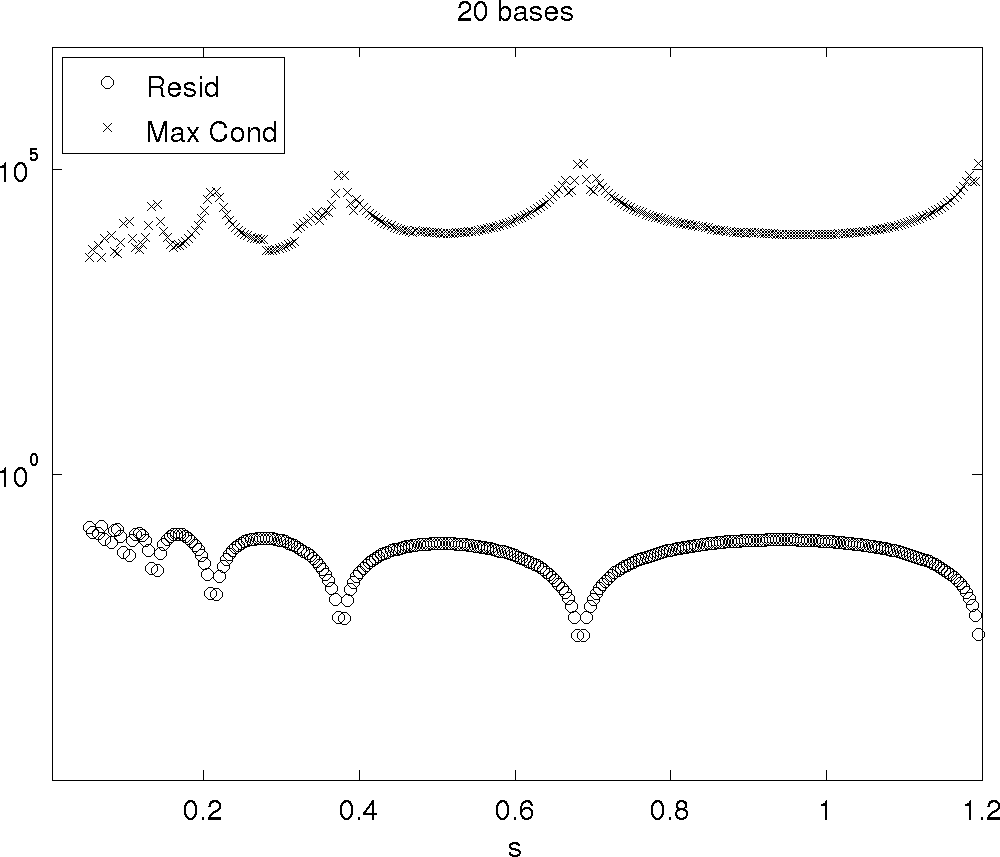}
\label{fig:res20}
}
\subfloat[]{
\includegraphics[scale=0.32]{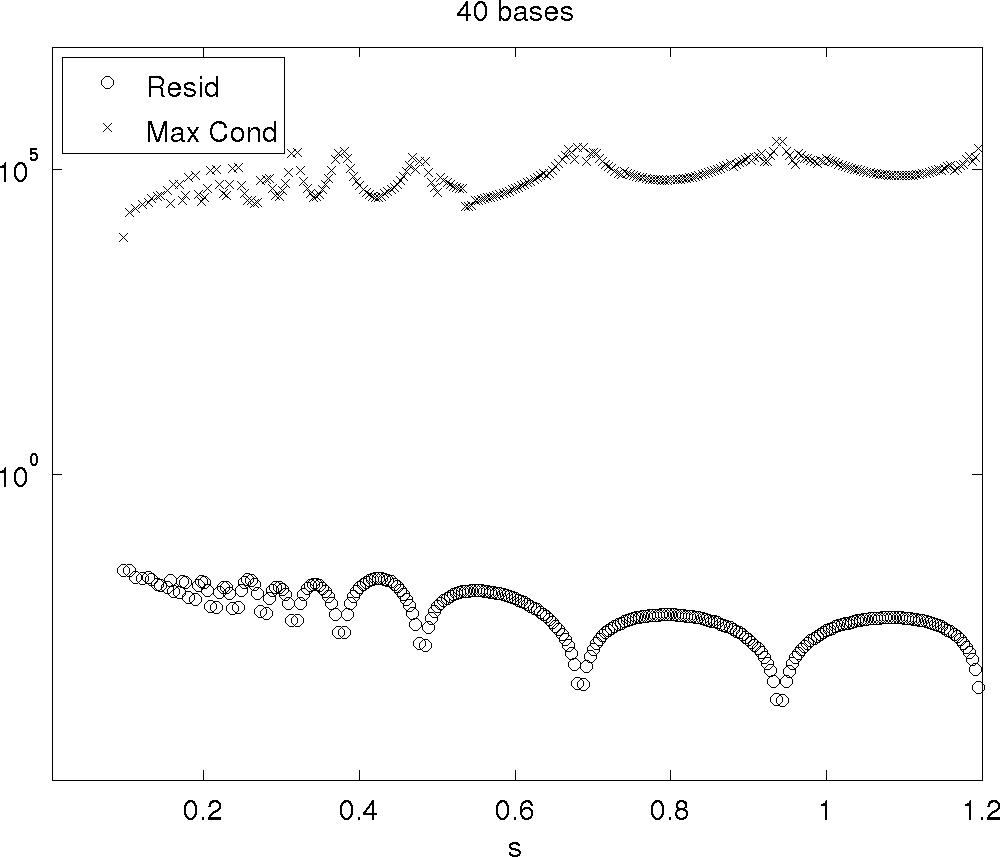}
\label{fig:res40}
}
\end{center}
\caption{Maximum condition number of $R_k$ from \protect\eqref{eq:newton} over all Newton iterations compared to minimum
residual for \protect\subref{fig:res5} 5, \protect\subref{fig:res10} 10, \protect\subref{fig:res20} 20, and
\protect\subref{fig:res40} 40 bases.}
\label{fig:illcon}
\end{figure}

\subsection{Nonlinear Transient Heat Conduction Study}

Next, we examine a two-dimensional
transient heat conduction model for a steel beam with uncertain material properties in a high temperature environment.
For a given realization of the thermal conductivity, the output of interest is the proportion of the domain whose
temperature exceeds a critical threshold after 70 seconds. We therefore need the temperature at each point in the
domain at $t=70$ seconds. Given a few simulations of the heat transfer for chosen thermal conductivities, the goal of
the model interpolation is to approximate the output of interest at other possible thermal conductivities. We are
employing the interpolation at a single point in time, which is the setting where we expect to see computational savings.

\subsubsection{Parameterized Model of Thermal Conductivity}

In~\cite{Kodur10}, Kodur and co-authors review the high temperature constitutive relationships for steel currently
used in European and American standards, as well as present the results of various experimental studies 
in the research literature. They discuss the challenges of modeling and design given the variation in the standards and
experimental data -- particularly when designing for safety in high temperature environments. We use their work to
inform a statistical model of the thermal conductivity of steel.  

To construct a statistcal model consistent with the data and codes compiled in~\cite{Kodur10}, we pose a particular
model form and choose its parameters to yield good visual agreement with the compiled data. For a given temperature $T$,
let $Y=Y(T,\omega)$ be a random variable that satisfies
\begin{equation}
Y \;=\; \bar{Y}(T) + \sigma_Y(T)\,G_Y(T,\omega).
\end{equation}
The dependence on $\omega$ signifies the random component of the model; when clear from the context, we omit
explicit dependence on $\omega$. The function $G_Y(T,\omega)$ is a standard Gaussian random field with zero mean and
two-point correlation function 
\begin{equation}
\label{eq:corr}
\mathcal{C}(T_1,T_2) = \exp\left(\frac{-(T_1-T_2)^2}{\gamma^2}\right).
\end{equation}
The parameter $\gamma$ controls the correlation between two temperatures $T_1$ and $T_2$ in the model.
The apparent smoothness of the experimental data (see Figure \ref{fig:k}) suggests a long correlation length; we choose 
$\gamma=\sqrt{500}^{\circ}\mathrm{C}$. The temperature dependent mean $\bar{Y}(T)$ is given by the the log of
the Eurocode 3 standard detailed in~\cite{Kodur10},
\begin{equation}
\label{eq:meanX}
\bar{Y}(T) = \left\{
\begin{array}{cl}
\log(-0.0333T + 54) & T<800^{\circ} \mathrm{C}\\
\log(27.30) & T\geq 800^{\circ} \mathrm{C}
\end{array}
\right.
\end{equation}
The temperature dependent function $\sigma_Y(T)$ is used to scale the variance of the model to be consistent with the
experimental data. In particular, we choose 
\begin{equation}
\sigma_Y(T) = 0.08 + 0.004\sqrt{T}. 
\end{equation}
To model the thermal conductivity $\kappa=\kappa(T,\omega)$, we take the exponential
\begin{equation}
\kappa = \exp(Y),
\end{equation}
which ensures that realizations of $\kappa$ remain positive. We employ the truncated Karhunen-Loeve
expansion~\cite{Loeve78} of the Gaussian process $G_Y$ to represent the stochasticity in $\kappa$ as a set of
independent parameters:
\begin{equation}
\label{eq:kl}
G_Y(T,\omega) \;\approx\; \sum_{i=1}^d \phi_i(T)\,\sqrt{\lambda_i}\,s_i(\omega),
\end{equation}
where $(\phi_i,\lambda_i)$ are eigenpairs of the correlation function $\mathcal{C}$ from (\ref{eq:corr}), and
$s_i=s_i(\omega)$ are a set of independent standard Gaussian random variables. The eigenfunctions $\phi_i$ from
(\ref{eq:kl}) are approximated on a uniform discretization of the temperature interval
$[0^{\circ}\mathrm{C},1250^{\circ}\mathrm{C}]$ with 600 nodes; this discretization is sufficient to capture the
correlation effects. To compute the approximate $(\phi_i,\lambda_i)$, we solve the discrete eigenvalue problem for the
symmetric, positive semidefinite correlation matrix associated with the discretization of the temperature interval. The
exponential decay of the computed eigenvalues justifies a truncation of $d=11$.

The random variables $s_i$ control the realization of the stochastic model. We can then treat them as a set of
independent input parameters for uncertainty and sensitivity analysis. To summarize, we have the following model for
thermal conductivity:
\begin{equation}
\label{eq:k}
\kappa\;=\;\kappa(T,\xi)\;=\; \exp\left[
\bar{Y}(T) + \sigma_Y(T)
	\left( 
	\sum_{i=1}^d \phi_i(T)\,\sqrt{\lambda_i}\,s_i
	\right)
\right].
\end{equation}
Note that we have been loose with the approximation step in (\ref{eq:kl}). In the end, we treat the truncated
approximation of $G_Y$ to be the true statistical model. In figure \ref{fig:k}, We plot fifty realizations of the
statistical model alongside the log of the data and standards collected in~\cite{Kodur10}.

\begin{figure}%
\centering
\subfloat[Conductivity Data]{
\includegraphics[scale=0.28]{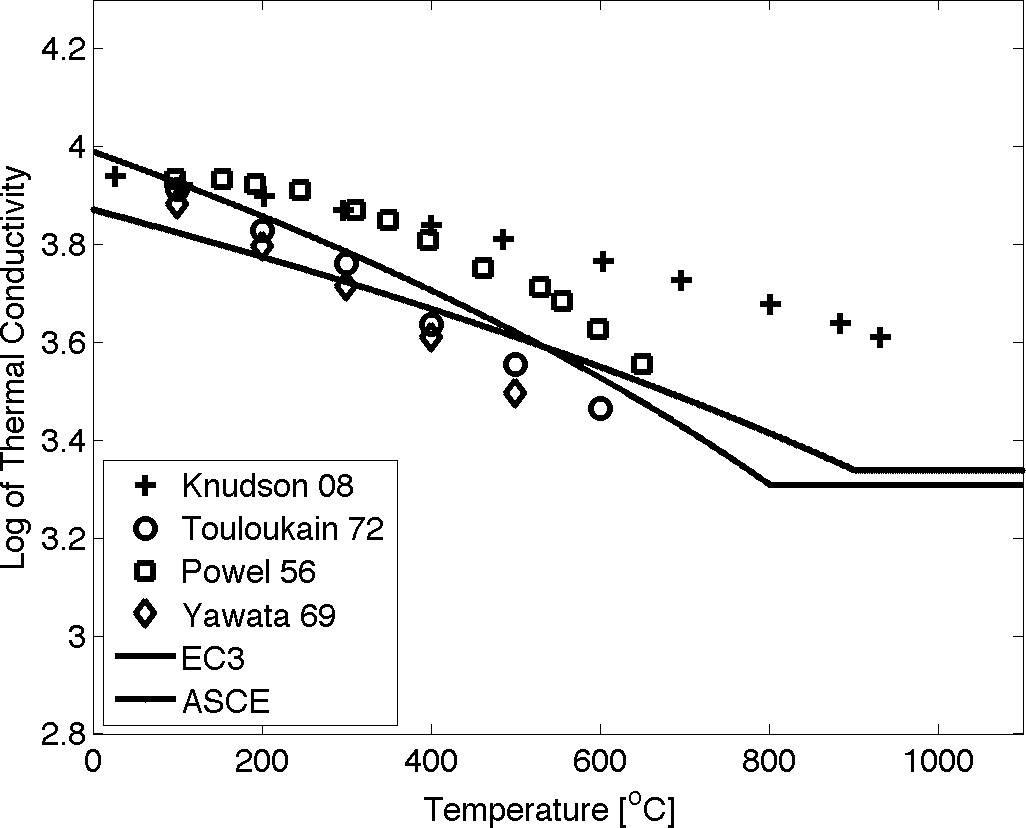}
}\qquad
\subfloat[Conductivity Model Realizations]{
\includegraphics[scale=0.28]{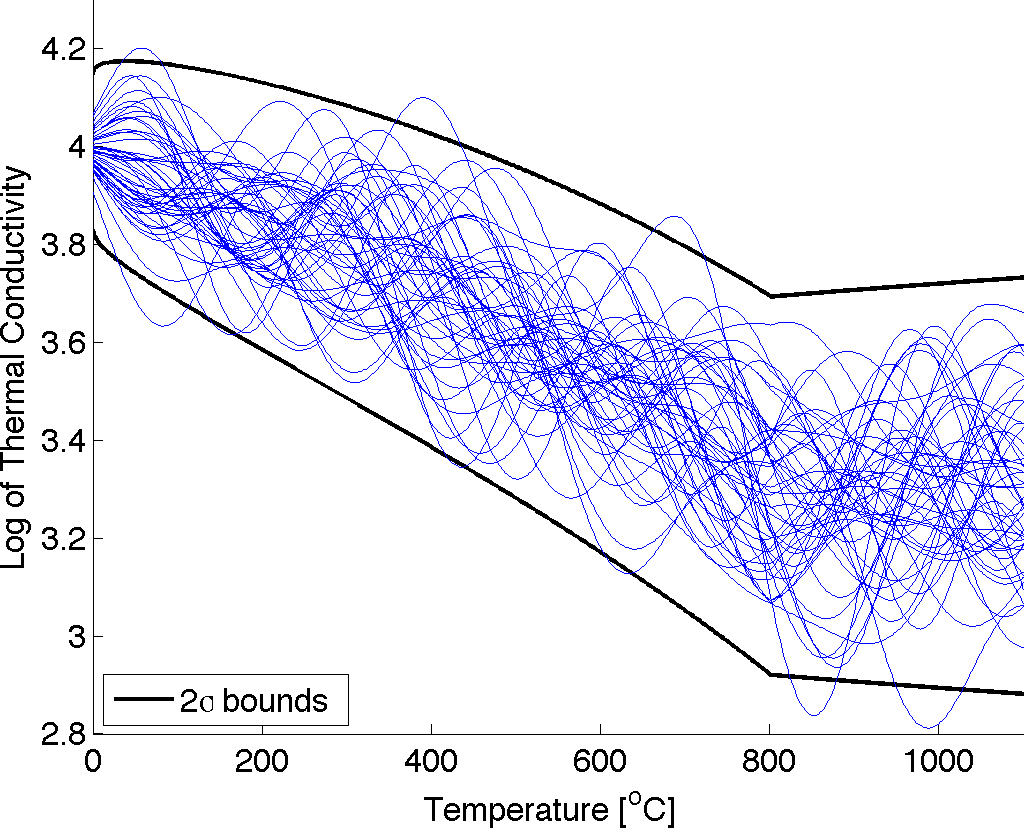}
}
\caption{Data and model realizations of temperature dependent thermal conductivity with uncertainty.}
\label{fig:k}
\end{figure}

\subsubsection{Heat Conduction Model}
Next we incorporate the statistical model for thermal conductivity into a computational simulation. The domain
$\mathcal{D}$ is a cross section of a steel beam -- shown in Figure \ref{fig:domain} with labeled boundary segments
$\Gamma_1$ and $\Gamma_2$. The boundary temperature is prescribed on $\Gamma_1$ to increase rapidly up to a maximum
value of $1100^{\circ}\mathrm{C}$; this represents rapid heating due to a fire. The boundary segment on $\Gamma_2$ is
assigned a zero heat flux condition. 

\begin{figure}
 \centering
	\subfloat[Domain]{
	\includegraphics[scale=0.28]{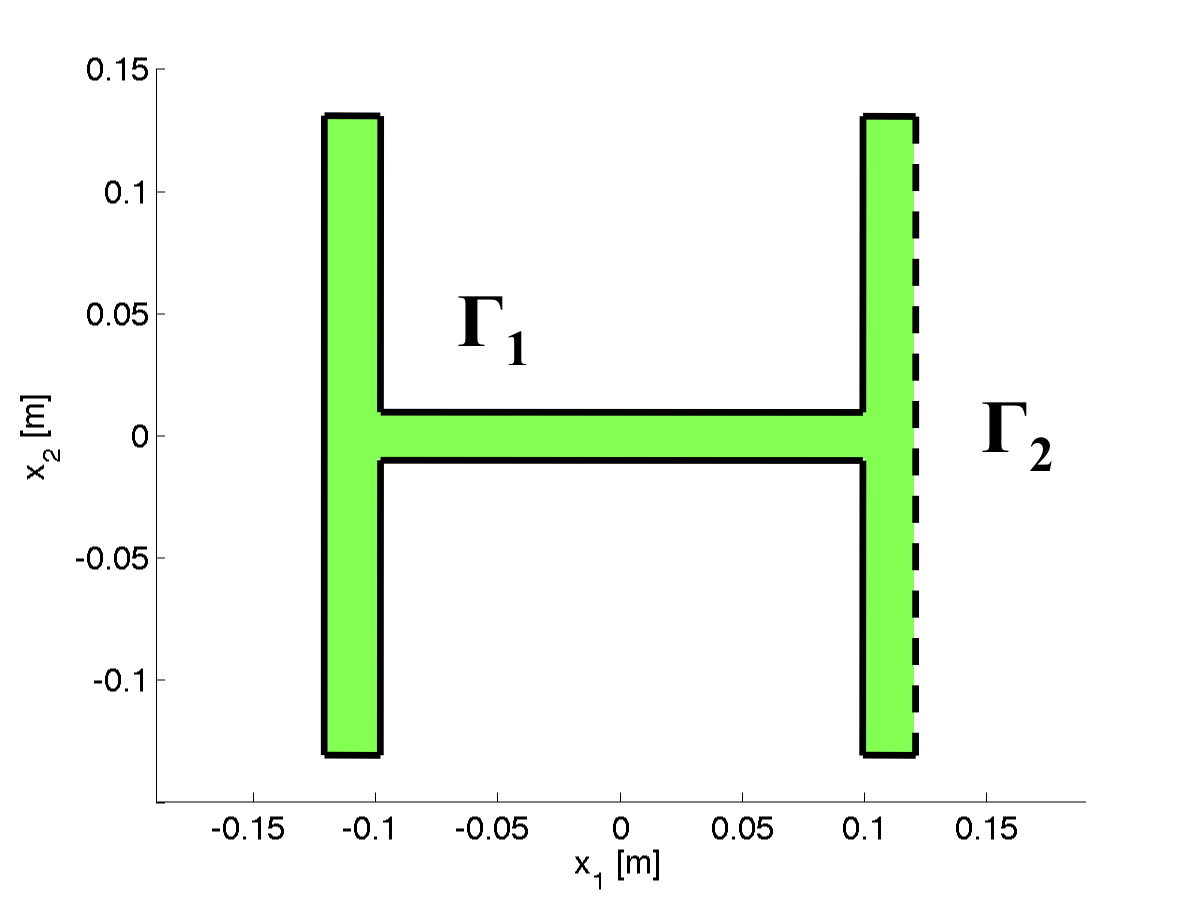}
	}\qquad
	\subfloat[Mesh]{
	\includegraphics[scale=0.28]{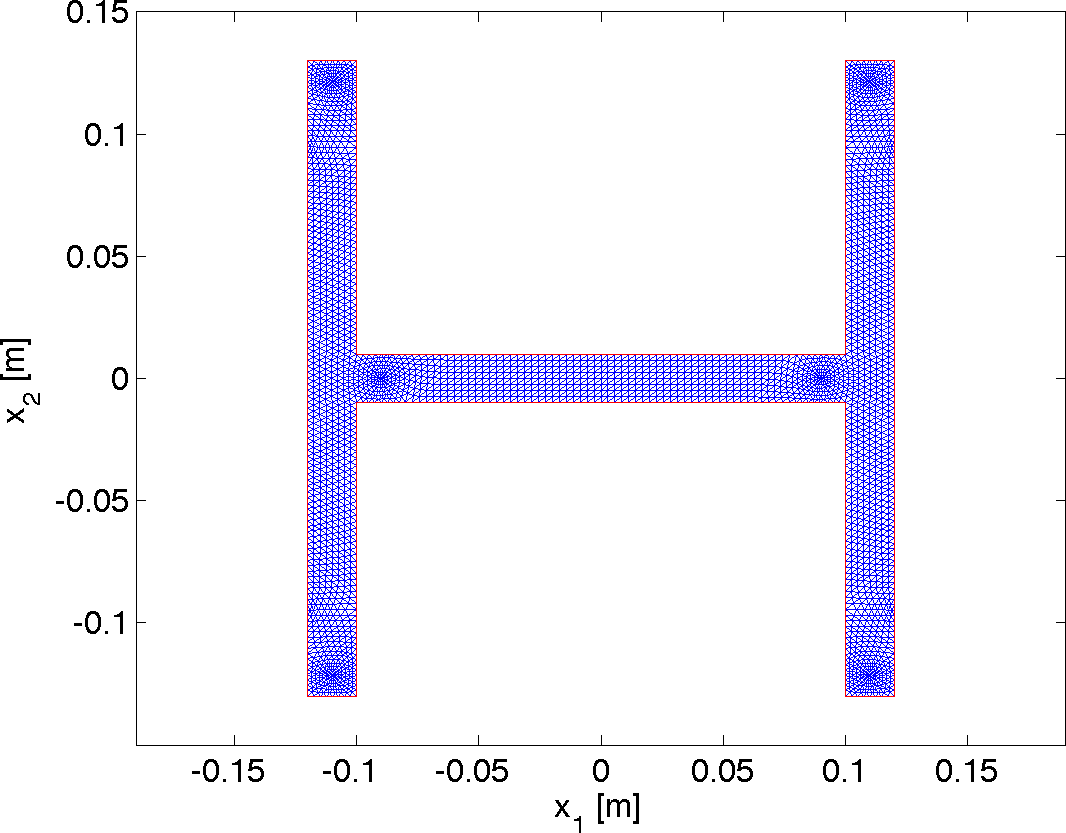}
	}
 \caption{Spatial domain $\mathcal{D}$ for the heat conduction problem \protect\eqref{eq:heat} and associated mesh.}
 \label{fig:domain}
\end{figure}

We set up the problem with the following heat conduction model. For 
$x=(x_1,x_2)\in\mathcal{D}$ and $t\in[0,70]$, let $T=T(x,t,s)$ be the time and space dependent
temperature distribution that satisfies the heat conduction model
\begin{equation}
\label{eq:heat}
\rho c\frac{\partial T}{\partial t} = -\nabla\cdot(\kappa\nabla T),
\end{equation}
with boundary conditions
\begin{equation}
T = T_b(x,t), \quad x\in\Gamma_1, \;\mbox{ and }\; -\kappa\nabla T = 0, \quad x\in\Gamma_2
\end{equation}
where
\begin{equation}
\label{eq:boundary}
T_b(x,t) = \min\left\{\,1100,\, \max\left[\,20,\, \frac{98}{3}t-6000x-700\,\right]\,\right\}.
\end{equation}
The space and time dependent Dirichlet boundary condition represents a rapidly warming environment; its spatial
dependence is plotted for three different times in Figure \ref{fig:boundary}. Notice that the temperature dependent
thermal conductivity $\kappa$ makes the model nonlinear. We set the initial condition to $T=20^{\circ}\mathrm{C}$
throughout the domain.

The spatial domain $\mathcal{D}$ is discretized with 2985 nodes on the irregular triangular mesh shown in Figure
\ref{fig:domain}, and the solution is approximated in space with standard piecewise linear finite elements; all spatial
discretization is performed with the MATLAB PDE Toolbox. The time stepping is performed by MATLAB's \texttt{ode15s}
time integrator on the spatially semidiscrete form of (\ref{eq:heat}). For reference, the temperature distribution at
three points in time is plotted in figure \ref{fig:heat} using the mean value $\kappa=\exp(\bar{Y})$.

\begin{figure}
\centering
\subfloat[]{
\includegraphics[scale=0.20]{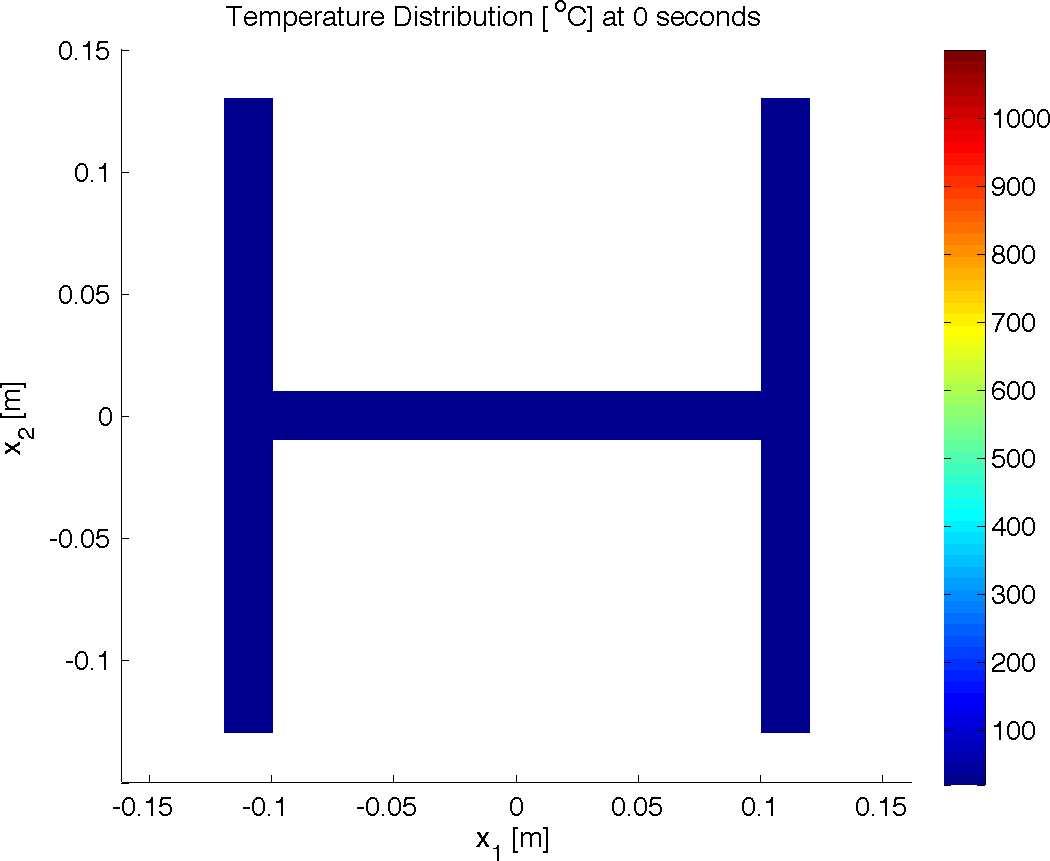}
}\quad
\subfloat[]{
\includegraphics[scale=0.20]{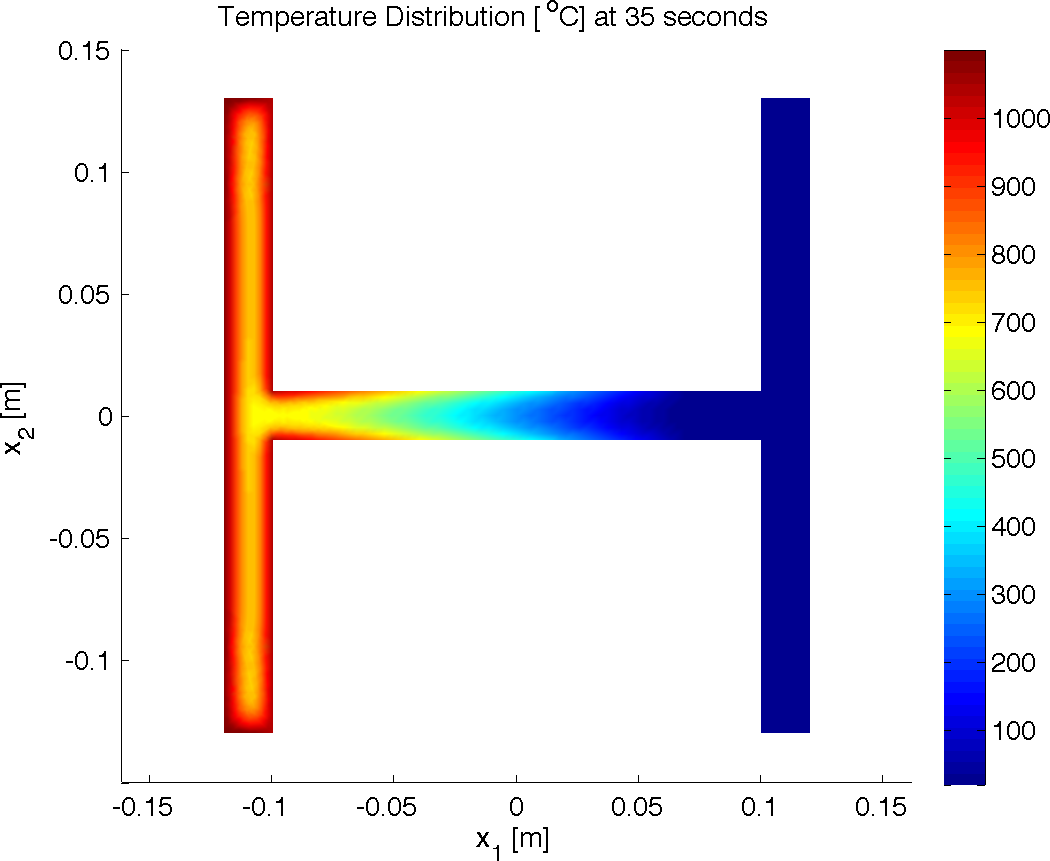}
}\quad
\subfloat[]{
\includegraphics[scale=0.20]{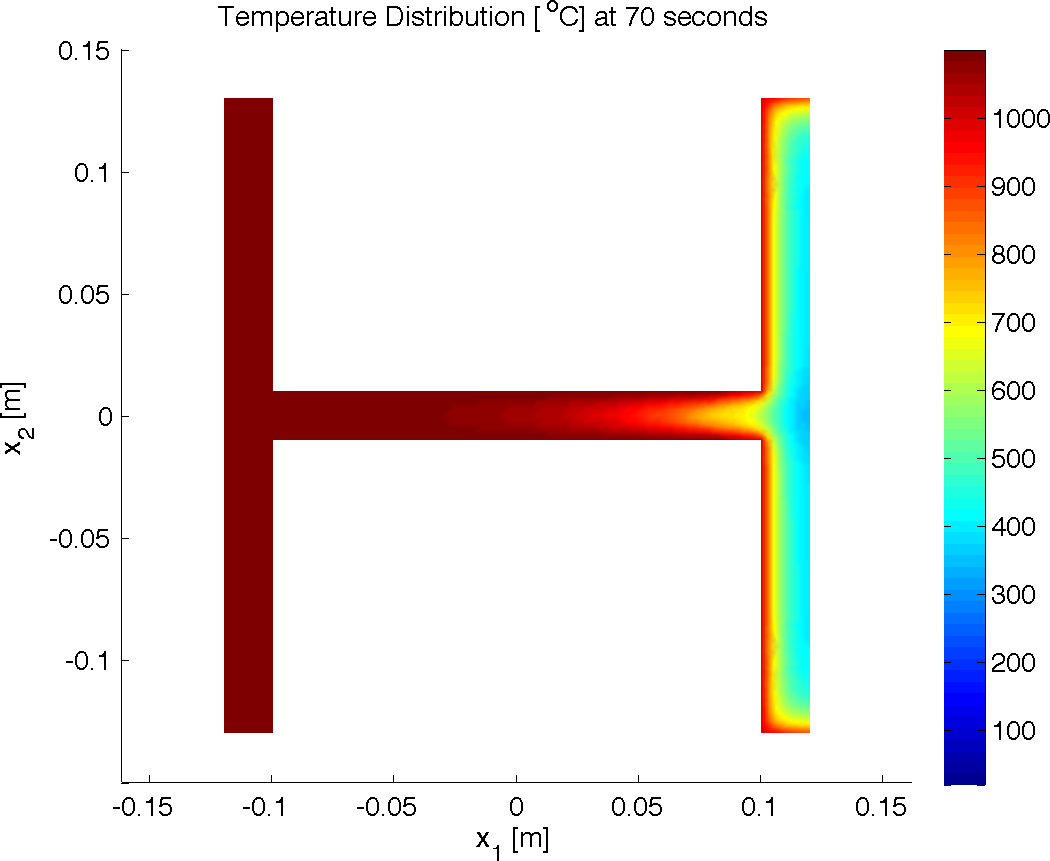}
}
\caption{Temperature distribution at the three different times using the mean trend for $\kappa$.}
\label{fig:heat}
\end{figure}

\begin{figure}
\centering
\subfloat[]{
\includegraphics[scale=0.20]{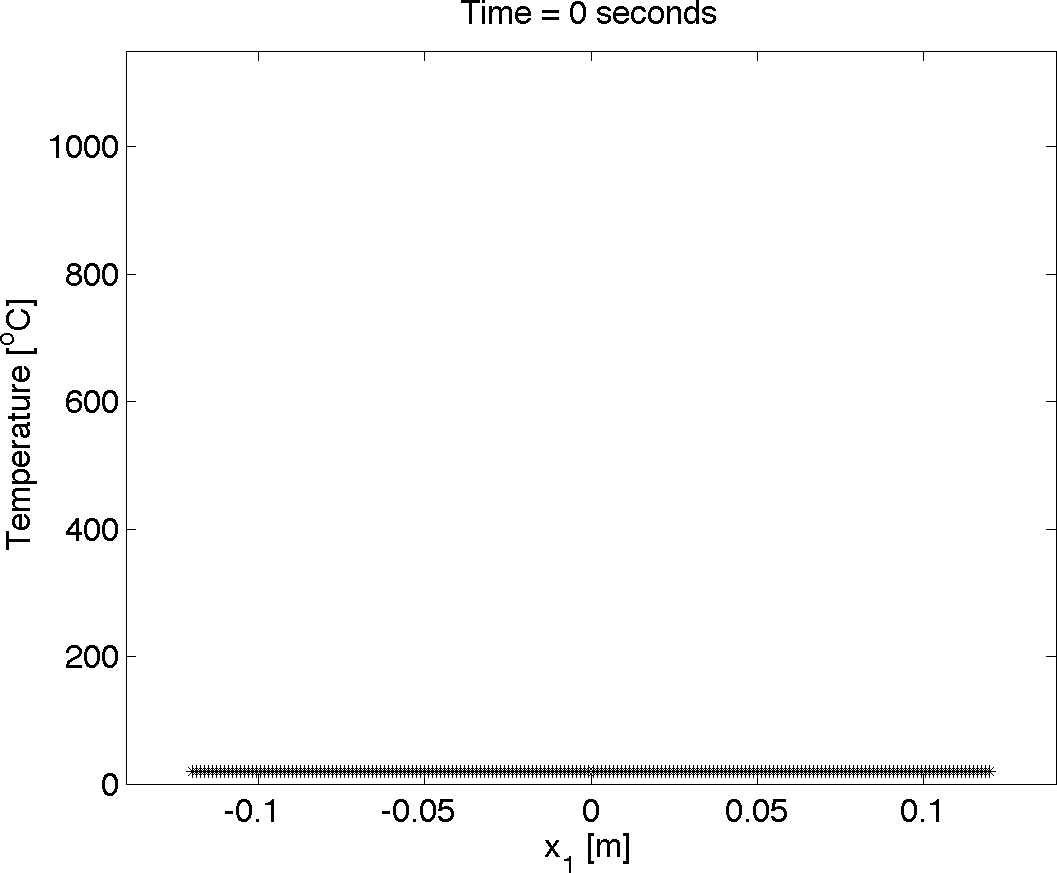}
}\quad
\subfloat[]{
\includegraphics[scale=0.20]{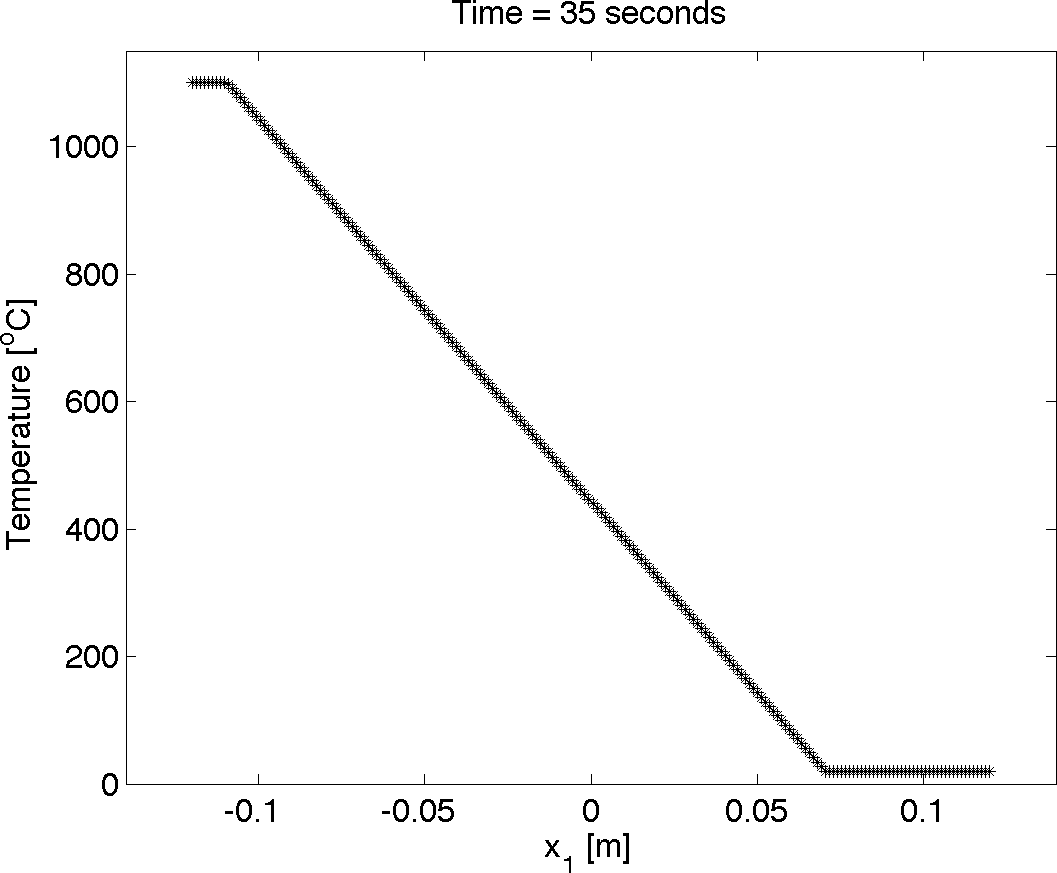}
}\quad
\subfloat[]{
\includegraphics[scale=0.20]{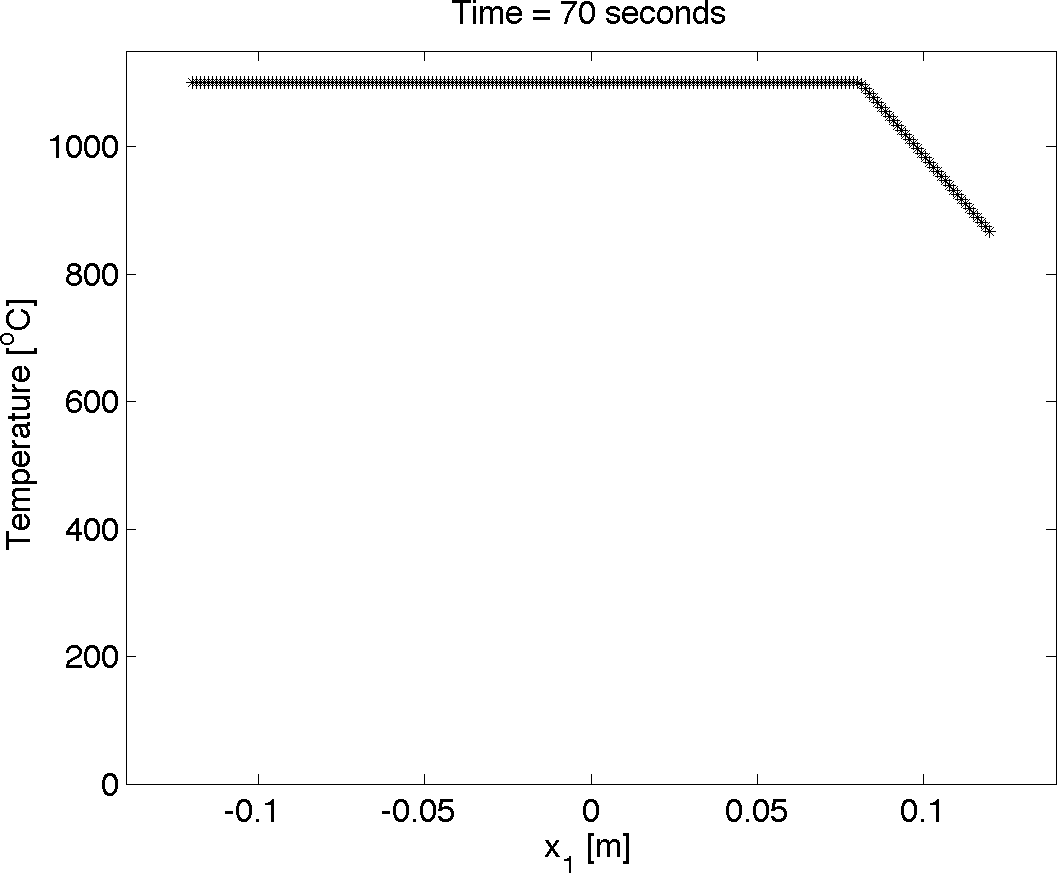}
}
 \caption{Space and time dependent Dirichlet boundary condition on the boundary segment $\Gamma_1$.}
 \label{fig:boundary}
\end{figure}

\subsubsection{Model Interpolation Study}

To test the residual minimizing model interpolation scheme, we set up the following experiment. We first draw 1000
independent realizations of a standard Gaussian random vector with $d=11$ independent components. Denote these points
$s_k\in\sS$ with $k=1,\dots,1000$ where $\sS$ is the input parameter space. For each $s_k$, we compute the
corresponding $\kappa(T,s_k)$, and the resulting temperature distribution $T_k=T(x,t,s_k)$ with the Matlab solver. From
each temperature distribution, we compute the fraction of the distribution that exceeds a critical threshold
$\tau=1000^{\circ}\mathrm{C}$ at time $t=70$ seconds. Let $Q_k=Q(s_k)$ be defined by the numerical approximation
\begin{equation}
Q_k \;\approx\; \frac{1}{|\mathcal{D}|}\int_{\mathcal{D}} I(\,T_k > \tau\,) \,dx.
\end{equation}
These $Q_k$ will constitute the cross-validation data set. 

To construct the interpolant, we draw 20 independent realizations of an 11-dimensional standard Gaussian random vector;
denote these points by $s_j\in\sS$. For each $s_j$ with $j=1,\dots,20$, we compute $T_j=T(x,t,s_j)$. From each
time history of the temperature distribution we retain the distribution at the final time $t=70$ seconds; these
constitute the basis elements for $t=70$. We also compute the quantity $f(T_j)=-\nabla\cdot(\kappa \nabla T_j)$ at time
$t=70$ for each basis element, which is used to set up the nonlinear least squares problems \eqref{eq:nlls}.

For each $s_k$ from the cross-validation set, we build the interpolant $\tilde{T}_k=\tilde{T}(x,t=70,s_k)$ using
the 20 basis elements. We then compute the fraction of the interpolated temperature distribution that exceeds
the threshold,
\begin{equation}
\tilde{Q}_k \;\approx\; \frac{1}{|\mathcal{D}|}\int_{\mathcal{D}} I(\,\tilde{T}_k > \tau\,) \,dx.
\end{equation}
We compute the error $\mathcal{E}_k=\mathcal{E}(s_k)$ with respect to the cross-validation data
\begin{equation}
\label{eq:errcomp}
\mathcal{E}_k = |Q_k-\tilde{Q}_k|.
\end{equation}
Errors are averaged over the $s_k$, $\mathcal{E}=(1/1000)\sum_k \mathcal{E}_k$. 

To test the windowing heuristic from Section \ref{sec:cred}, we choose the $M=5$ basis elements nearest the
interpolation point $s_k$ from each basis set. We compute the same $\tilde{Q}_k$ for each $s_k$ using the smaller basis
set, and the error is computed as in \eqref{eq:errcomp}. 

To average out some of the effects of randomly choosing an especially good or bad basis set with respect to the
cross-validation set, we repeat this experiment 10 times with different randomly chosen basis sets. Errors are averaged
over all $s_k$ in the cross-validation set and over all 10 expereiments. A histogram of the log of all errors is shown
in figure \ref{fig:hists} for both the full basis set and the windowed basis set. We see that there is practically no
difference in error for the smaller windowed basis set.

\begin{figure}
\centering
\subfloat[Full interpolation error]{
\includegraphics[scale=0.32]{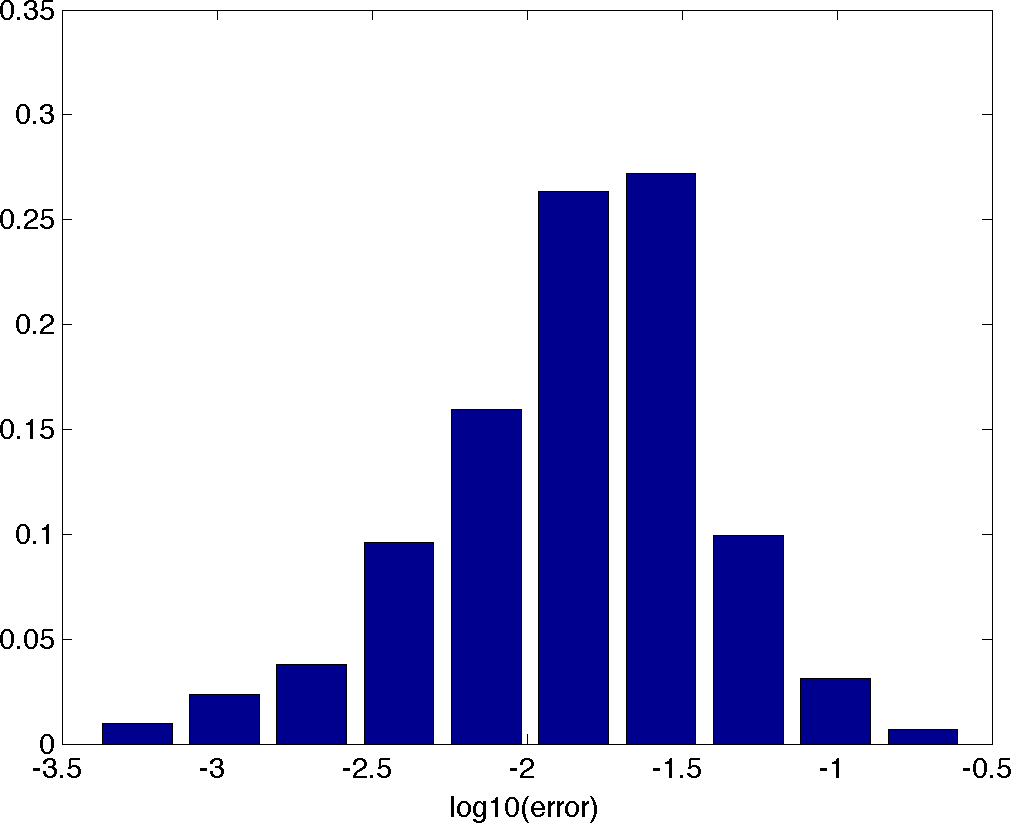}
}
\subfloat[Reduced interpolation error]{
\includegraphics[scale=0.32]{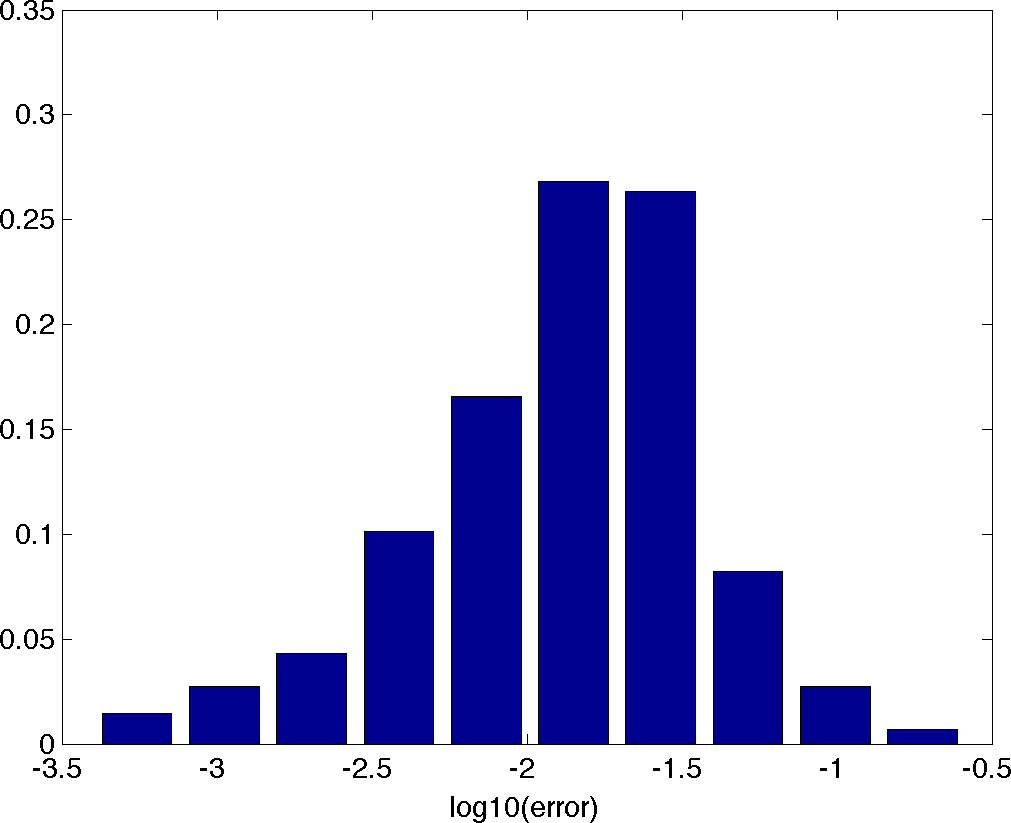}
}
\caption{Histograms of the log of the error between the interpolants and the cross-validation data.}
\label{fig:hists}
\end{figure}

In figure \ref{fig:timez}, we show a histogram of the wall clock timings for the full basis set and the smaller basis
sets. We see that the basis reduction cuts wall clock time by 24\% with no practical
change in error. In table \ref{tab:ex1}, we display the average and standard deviations of the wall clock timings for
computing the full model (the cross-validation data), the full interpolation (all 20 bases), and the reduced
interpolant (5 chosen bases). We also show the average number of function evaluations in each case; the counts of
function evaluations for the full model are output by the Matlab \texttt{ode15s} solver. For the full and reduced
interpolants, the nonlinear least squares solver used a maximum of ten Newton iterations for each evaluation.

\begin{figure}
\centering
\includegraphics[scale=0.32]{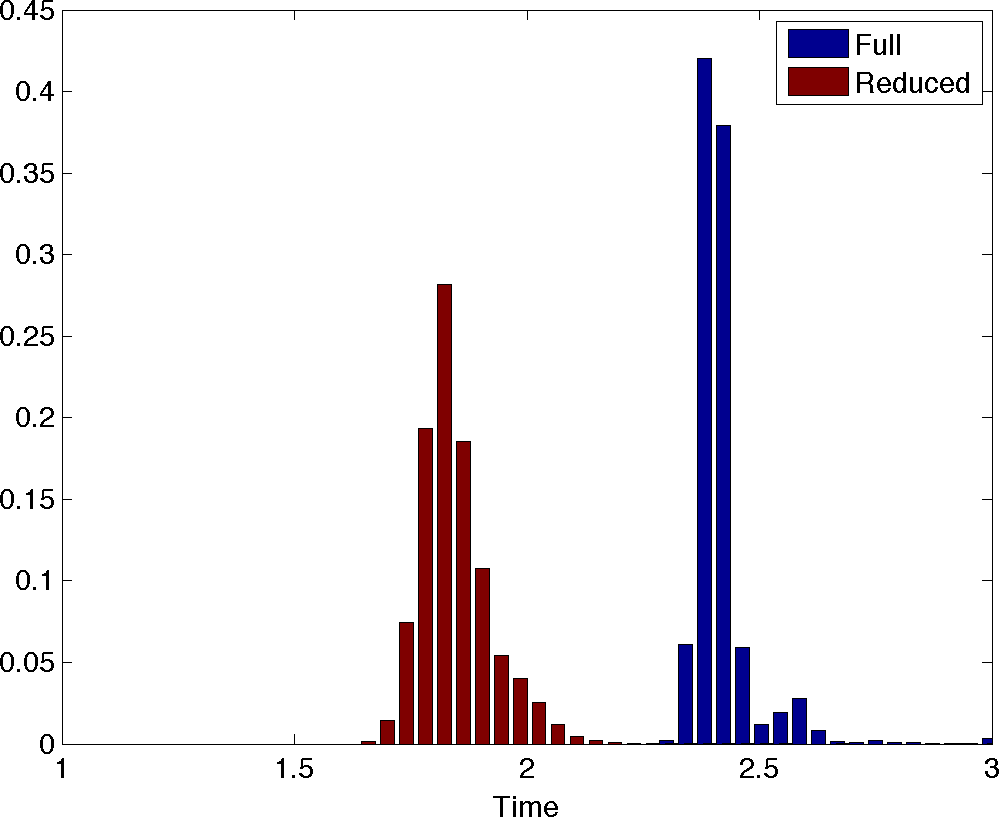}
\caption{Histogram of the wall clock times for the full and reduced interpolants.}
\label{fig:timez}
\end{figure}

\begin{table}
\caption{Timing and function evaluation counts for full model and interpolation.}
\begin{center}
\begin{tabular}{|c|c|c|c|}
\hline
 & Avg. Time (s) & Std. Time (s) & Avg. \# $f$ Evals \\
\hline
Full Model & 178.82 & 5.59 & 6166 \\
Full Interp. & 2.42 &  0.09 & 230 \\
Red. Interp. & 1.84 & 0.08 & 65 \\
\hline
\end{tabular}
\end{center}
\label{tab:ex1}
\end{table}

\section{Conclusions}
\label{sec:conclusions}

We have presented a method for approximating the solution of a parameterized, nonlinear dynamical system
using an affine combination of the time histories computed at other input parameter values. The coefficients
of the affine combination are computed with a nonlinear least squares procedure that minimizes the residual of the
governing equations. In many cases of interest, the computational cost is less than evaluating the full
model, which suggests use for reduced order modeling. This residual minimizing scheme has similar
error and convergence properties to existing reduced basis methods and POD-Galerkin reduced order models; it stands out
from existing methods for its ease of implementation by requiring only independent evaluations of the forcing function
of the dynamical system. Also, since we do not reduce the basis with a POD type reduction, the approximation
interpolates the true time history at the parameter values corresponding to the precomputed solutions.

We proved some interesting properties of this scheme including continuity, convergence, and a lower bound on the error,
and we also show that the value of the minimum residual is intimately tied to the conditioning of the least squares
problems used to compute the Newton steps. We introduced heuristics to combat this ill-conditioning and further reduce
the cost of the method, which we tested with two numerical examples: (i) a three-state dynamical system representing
kinetics with a single parameter controlling stiffness and (ii) a nonlinear, parabolic PDE with a high-dimensional
random conductivity field.

\section{Acknowledgments}

The authors thank David Gleich at Purdue University for many helpful conversations and insightful comments. We also
thank Venkatesh Kodur at Michigan State University for providing us with the data to motivate and construct the
stochastic model of thermal conductivity. We also thank the anonymous reviewers for their helpful and insightful
comments.

\bibliographystyle{siam}
\bibliography{paulconstantine}

\end{document}